\documentclass[11pt,letterpaper]{amsart}
\usepackage{mathrsfs,color,balance,bm,amsmath,amsfonts,amssymb,dsfont,amscd,extarrows,enumerate,verbatim}
\usepackage[all]{xy}
\numberwithin{equation}{section}

\newtheorem{thm}{Theorem}[section]
\newtheorem{cor}[thm]{Corollary}
\newtheorem{lem}[thm]{Lemma}

\newtheorem{defn}[thm]{Definition}

\newcommand{\mr}{\mathbb{R}}

\newcommand{\mc}{\mathbb{C}}

\newcommand{\rw}{\rightarrow}

\DeclareMathOperator{\loc}{loc}

\DeclareMathOperator{\diam}{diam}
\DeclareMathOperator{\dom}{Dom}

\usepackage[pagebackref=true,colorlinks=true,bookmarksopen,bookmarksnumbered,citecolor=red, linkcolor=blue, urlcolor=cyan]{hyperref}

\allowdisplaybreaks[4]
\begin{document}

\title[$L^2$-estimates on flat vector bundles and Pr\'ekopa's theorem]
{$L^2$-estimates on flat vector bundles and Pr\'ekopa's theorem}

\author[G. Huang]{Gang Huang }
\address{Gang Huang: \ School of Mathematical Sciences, University of Chinese Academy of Sciences \\ Beijing 100049, P. R. China}
\email{huanggang21@mails.ucas.ac.cn}

\author[W. Jiang]{Weiwen Jiang}
\address{Weiwen Jiang: \ Institute of Mathematics, Academy of Mathematics and Systems Science, Chinese Academy of Sciences, Beijing 100190, P. R. China}
\email{jiangweiwen@amss.ac.cn}

\author[X. Qin]{Xiangsen Qin}
\address{Xiangsen Qin: \ School of Mathematical Sciences, University of Chinese Academy of Sciences \\ Beijing 100049, P. R. China}
\email{qinxiangsen19@mails.ucas.ac.cn}

\begin{abstract}
    In this paper, we will construct H\"ormander's $L^2$-estimate of the operator $d$ on a flat vector bundle over a $p$-convex Riemannian manifold and 
    discuss some geometric applications of it. In particular, we will generalize the classical Pr\'ekopa's theorem in convex analysis.
\end{abstract}
%\thanks{(*) The second author and the third author are both corresponding authors.}

\maketitle
\tableofcontents
\section{Introduction} 

 \par In complex analysis, the $L^2$-estimate of $\bar\partial$-equation on pseudoconvex domains was established by H\"ormander in 1965 in his fundamental work \cite{H65}. H\"ormander's result was generalized to a complete K\"ahler manifold by Demailly and other authors in the early eighties (see e.g. \cite[Theorem 4.5, Chapter VIII]{D12}),  where the metric can be not complete if one only consider $(n,q)$-forms (\cite[Theorem 6.1, Chapter VIII]{D12}).
        Surprisingly, one had not seen analogous work about the $L^2$-estimate for the $d$-operator
        until a decade later when Brascamp and Lieb proved a parallel result in \cite{BL76} on $\mr^n$.
        The $L^2$-estimate for the $d$-operator on general convex domains was proved only recently in \cite{JLY14} for trivial line bundles.
        For positively curved vector bundles on the whole $\mr^n$,
        the $L^2$-estimate for the $d$-operator parallel to Brascamp-Lieb's result is given in \cite{C19} in 2019, and then
       be generalized to a general convex domain of $\mr^n$ in \cite{Z23} and \cite{DHJQ24}.\\
     %   The main motivation to \cite{BL76}, \cite{Cor19} is to prove the Brunn-Minkowski type result in convex analysis for Riemannian vector bundles with positive %curvature.
      %  As indicated in H\"ormander's work, generalizing an estimate for the $d$-operator from the whole $\mr^n$ to general convex domains
      %  may require a highly nontrivial effort to prove some integral inequalities for certain forms that are not smooth.
      \indent As indicated in the abstract, in the present paper, we will establish  the $L^2$-estimate for $d$ on a flat vector bundle over a $p$-convex Riemannian manifold
      and discuss some geometric applications of it. Indeed, the first main result of this paper is the following:
\begin{thm}\label{thm:flat vector bundle}
Let $M$ be an  $n$-dimensional $p$-convex Riemannian manifold without boundary for some  $p\in\{1,\cdots,n\}$,
$(E,h)$ be a flat vector bundle over $M$. Suppose $\mathfrak{Ric}_p+\Theta^{(E,h)}\geq 0,$ then for any 
$d$-closed $f\in L^2_{\loc}(M,\Lambda^pT^*M\otimes E)$ satisfying 
$$\int_{M}\left\langle \left(\mathfrak{Ric}_p+\Theta^{(E,h)}\right)^{-1}f,f\right\rangle dV<\infty,$$
there exists $u\in L^2(M,\Lambda^pT^*M\otimes E)$ such that 
$$du=f\text{ and }\int_{M}|u|^2dV\leq \int_{M}\left\langle \left(\mathfrak{Ric}_p+\Theta^{(E,h)}\right)^{-1}f,f\right\rangle dV.$$
Moreover, if $f$ is smooth, then $u$ can be taken to be smooth.
\end{thm}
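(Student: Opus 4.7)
The plan is to adapt H\"ormander's $L^2$-duality strategy to the $d$-operator on $E$-valued $p$-forms, using a Bochner--Weitzenb\"ock identity as the pointwise engine, handling the possibly non-complete $M$ via a $p$-convex exhaustion, and extracting $u$ by functional analysis.

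First I would derive, for smooth compactly supported $E$-valued $p$-forms $v$, the Bochner identity
\begin{equation*}
\|dv\|^2+\|d^*v\|^2=\|\nabla v\|^2+\int_M\langle(\mathfrak{Ric}_p+\Theta^{(E,h)})v,v\rangle\,dV,
\end{equation*}
where $\nabla$ is the metric connection on $(E,h)$ and $\mathfrak{Ric}_p$ is the $p$-th Weitzenb\"ock curvature operator on forms. Flatness of $E$ keeps the bundle-curvature contribution limited to the single term $\Theta^{(E,h)}$. Dropping the non-negative gradient term yields the a priori inequality
\begin{equation*}
\int_M\langle(\mathfrak{Ric}_p+\Theta^{(E,h)})v,v\rangle\,dV\leq\|dv\|^2+\|d^*v\|^2,
\end{equation*}
and combined with Cauchy--Schwarz it gives $|\langle f,v\rangle|^2\leq\int_M\langle(\mathfrak{Ric}_p+\Theta^{(E,h)})^{-1}f,f\rangle\,dV\cdot(\|dv\|^2+\|d^*v\|^2)$ for every $d$-closed $f$ with finite right-hand side. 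H\"ormander's standard $L^2$-existence criterion then produces $u\in L^2$ with $du=f$ and the claimed bound, provided that the above inequality can be extended to all of $\dom(d)\cap\dom(d^*)$.

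Since $M$ may be non-complete, extending the inequality beyond $C^\infty_c$ requires a regularization step. I would use the $p$-convexity of $M$ to fix an exhaustion $\varphi$ whose Hessian has non-negative sum of its smallest $p$ eigenvalues (the content of $p$-convexity), then introduce weights $e^{-\chi(\varphi)}$ with $\chi$ convex and rapidly increasing. The Hessian of $\chi(\varphi)$ contributes positively to $\mathfrak{Ric}_p+\Theta^{(E,h)}$ in the weighted Bochner identity, and simultaneously enables an Andreotti--Vesentini/Demailly-type density argument showing that $C^\infty_c$ forms are dense in the weighted common domain of $d$ and $d^*$. Solving the weighted problem and passing to a weak limit as the weight degenerates recovers $u$ on $M$. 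Smoothness when $f$ is smooth follows from elliptic regularity applied to the Laplace-type equation satisfied by the minimal $L^2$-solution.

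The main obstacle I expect is precisely this weighted regularization in the $p$-convex (rather than fully convex) setting: one must choose $\chi$ so that simultaneously (i) the pointwise operator $\mathfrak{Ric}_p+\Theta^{(E,h)}$ augmented by the weighted Hessian term stays positive on $p$-forms, (ii) $C^\infty_c$ forms remain dense in the weighted common domain, and (iii) the weighted version of the right-hand side of the theorem stays uniformly bounded by the unweighted one, so that weak compactness delivers a limit solving $du=f$ in the distributional sense. Orchestrating these three conditions and controlling the weak limit is the heart of the argument.
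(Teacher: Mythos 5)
Your proposal is correct in outline and would deliver the theorem, but it follows a genuinely different route from the paper. The paper never introduces weights: using Sard's theorem it exhausts $M$ by relatively compact sublevel sets $\Omega$ of the $p$-plurisubharmonic exhaustion with $C^2$ $p$-convex boundary, proves a Bochner identity \emph{with a boundary term} $\int_{\partial\Omega}\langle F_\rho f,f\rangle\,dS/|\nabla\rho|$ (Lemma \ref{lem:bochner}), observes that $p$-convexity of $\partial\Omega$ makes this term non-negative, and extends the resulting a priori inequality to all of $\dom(d)\cap\dom(d^*)$ on $\Omega$ via a H\"ormander-type density lemma for forms smooth \emph{up to the boundary} (Lemma \ref{lem:approximation}, the analogue of \cite[Proposition 2.1.1]{H65}); the global solution is then a weak limit over the exhaustion. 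You instead stay with the interior identity for compactly supported forms (the paper's Lemma \ref{lem:bochner without boundary}) and propose to buy density of $C^\infty_c$ by twisting with weights $e^{-\chi(\varphi)}$, exploiting that $F_{\chi(\varphi)}\geq 0$ on $p$-forms (this is Equality (\ref{metric}) in the paper) and then removing the weight by letting $\chi$ vanish on larger and larger sublevel sets. Your route avoids boundary regularity and the boundary density lemma entirely, at the cost of the three-weight (Andreotti--Vesentini/H\"ormander) bookkeeping you correctly identify as the delicate point; the paper's route gets the clean unweighted estimate on each $\Omega$ immediately but must invoke Sard's theorem and a boundary approximation result. Both are classical and both close the argument the same way (Hahn--Banach plus Riesz representation, then elliptic regularity of $dd^*+d^*d$ for the minimal solution when $f$ is smooth). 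One minor imprecision: in your Bochner identity the connection appearing in the gradient term should be the flat connection of $E$ coupled with Levi-Civita (the paper's $D$), not the metric connection of $(E,h)$; the term $\Theta^{(E,h)}$ is precisely the defect of $h$ from being parallel for the flat connection. Since you discard the gradient term anyway, this does not affect the a priori inequality, but the identity as literally stated with the metric connection is not the one that holds.
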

\indent Please see Section \ref{sec:notations} for various notations and conventions.\\
\indent Theorem \ref{thm:flat vector bundle} generalizes all known results (as mentioned above) of the $L^2$-estimate for $d$. The definitions of $d$, $\Theta^{(E,h)}$ and the proof of Theorem \ref{thm:flat vector bundle} will be given in Section \ref{sec:flat}.
A key tool is the construction of the Bochner-type identity (see Lemma \ref{lem:bochner}).\\
\indent Furthermore, when $\Omega\subset\subset M$ is an open subset which has a strictly $p$-convex boundary,
then we may derive the following improved $L^2$-estimate.
\begin{thm}\label{thm:improved $L^2$-estimates}
Let $M$ be a Riemannian manifold without boundary of dimension $n\geq 2$, $\Omega\subset\subset M$ be an open subset with 
a smooth strictly $p$-convex boundary defining function $\rho$
such that $\mathfrak{Ric}_p\geq 0$ on $\Omega$, and let $(E,h)$ be a trivial vector bundle over $M$
such that $\Theta^{(E,h)}\geq 0$ on $\Omega$. Then 
there is a constant $\delta:=\delta(\Omega,\rho,h)>0$ such that for any nonzero 
$d$-closed $f\in L^2(\Omega,\Lambda^pT^*M\otimes E)$ satisfying 
$$N_f:=\int_{\Omega}\left\langle \left(\mathfrak{Ric}_p+\Theta^{(E,h)}\right)^{-1}f,f\right\rangle dV<\infty,$$
there exists $u\in L^2(\Omega,\Lambda^pT^*M\otimes E)$ such that $du=f$ and 
$$\int_{\Omega}|u|^2dV\leq \frac{\|f\|_{L^2(\Omega)}}{\sqrt{\|f\|^2_{L^2(\Omega)}+\delta N_f}}\int_{\Omega}\left\langle \left(\mathfrak{Ric}_p+\Theta^{(E,h)}\right)^{-1}f,f\right\rangle dV,$$
where $\|f\|_{L^2(\Omega)}^2:=\int_{\Omega}|f|^2dV$. Moreover, if $f$ is smooth, then $u$ can be taken to be smooth.
\end{thm}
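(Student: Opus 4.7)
My approach is to refine the Bochner identity and duality argument underlying Theorem~\ref{thm:flat vector bundle} by extracting an additional positive $\|v\|_{L^2}^2$-term from the strictly $p$-convex boundary $\partial\Omega$, then running an optimized Cauchy--Schwarz that combines this with the basic weighted estimate.

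\textbf{Step 1 (Improved Bochner inequality on $\Omega$).} Integration by parts on $\Omega$ in the identity of Lemma~\ref{lem:bochner} now produces a nontrivial boundary integral $\int_{\partial\Omega}\mathcal{B}_\rho(v,v)\,dS$ expressible in terms of the second fundamental form of $\partial\Omega$ (equivalently, of $\nabla^2\rho$) coupled with $h$. Strict $p$-convexity forces $\mathcal{B}_\rho\ge c_0\,\mathrm{id}$ on the relevant $p$-form traces, and a trace/Poincar\'e-type inequality on $\Omega$ valid on a suitable closed subspace (e.g.\ the orthogonal complement of $\ker d^*$ inside $\dom(d^*)$) converts this to a bulk bound
\begin{equation*}
\|d^*v\|_{L^2(\Omega)}^2+\|dv\|_{L^2(\Omega)}^2\;\ge\;\int_\Omega\langle Gv,v\rangle\,dV+\delta\,\|v\|_{L^2(\Omega)}^2,\qquad(\star)
\end{equation*}
where $G:=\mathfrak{Ric}_p+\Theta^{(E,h)}\ge 0$ and $\delta=\delta(\Omega,\rho,h)>0$.

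\textbf{Step 2 (Convex combination of two Cauchy--Schwarz estimates).} By the H\"ormander duality used in the proof of Theorem~\ref{thm:flat vector bundle}, solvability of $du=f$ with norm bound $K$ is equivalent to the test-form inequality $|\langle f,v\rangle|^2\le K\|d^*v\|_{L^2(\Omega)}^2$ for $d$-closed $v\in\dom(d^*)$. Two routine applications of Cauchy--Schwarz give
\begin{equation*}
|\langle f,v\rangle|^2\le N_f\cdot\int_\Omega\langle Gv,v\rangle\,dV,\qquad |\langle f,v\rangle|^2\le\|f\|_{L^2(\Omega)}^2\|v\|_{L^2(\Omega)}^2.
\end{equation*}
Multiplying the first by $\|f\|_{L^2(\Omega)}^2$ and the second by $\delta N_f$, adding, and then invoking $(\star)$ (the $\|dv\|^2$-term vanishes since $v$ is $d$-closed) yields
\begin{equation*}
|\langle f,v\rangle|^2\le\frac{\|f\|_{L^2(\Omega)}^2\,N_f}{\|f\|_{L^2(\Omega)}^2+\delta N_f}\cdot\|d^*v\|_{L^2(\Omega)}^2.
\end{equation*}

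\textbf{Step 3 (Conclude).} Feeding this sharper dual inequality into the H\"ormander existence machinery from Theorem~\ref{thm:flat vector bundle} produces $u\in L^2(\Omega,\Lambda^pT^*M\otimes E)$ with $du=f$ and
\begin{equation*}
\|u\|_{L^2(\Omega)}^2\le\frac{\|f\|_{L^2(\Omega)}^2\,N_f}{\|f\|_{L^2(\Omega)}^2+\delta N_f}\le\frac{\|f\|_{L^2(\Omega)}\,N_f}{\sqrt{\|f\|_{L^2(\Omega)}^2+\delta N_f}},
\end{equation*}
the last inequality being immediate from $\|f\|_{L^2(\Omega)}\le\sqrt{\|f\|_{L^2(\Omega)}^2+\delta N_f}$. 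Smoothness of $u$ when $f$ is smooth follows by elliptic regularity, as in Theorem~\ref{thm:flat vector bundle}.

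\textbf{Main obstacle.} The bulk of the work is Step~1: isolating the exact boundary term in the Bochner identity for $d$ on the flat bundle $(E,h)$ over $\Omega$, showing that strict $p$-convexity yields uniform positivity of $\mathcal{B}_\rho$ on the relevant piece of $p$-form traces (so that $c_0$ is independent of $v$), and then deploying a clean trace/Poincar\'e inequality on the correct closed subspace of $\dom(d^*)$ so as to absorb the boundary bound into $\delta\|v\|_{L^2(\Omega)}^2$. The constant $\delta$ must ultimately depend only on $\Omega,\rho,h$, never on $v$ or $f$; ensuring this uniformity is the geometric content that distinguishes Theorem~\ref{thm:improved $L^2$-estimates} from Theorem~\ref{thm:flat vector bundle}. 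A secondary, essentially standard, technical point is justifying the reduction to $d$-closed test forms in the duality step in the flat-bundle setting on a bounded domain.
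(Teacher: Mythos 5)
Your overall architecture coincides with the paper's: an improved basic estimate carrying an extra $\delta\|g\|_{L^2(\Omega)}^2$ term extracted from the strictly $p$-convex boundary, followed by the convex combination of the two Cauchy--Schwarz bounds, and then Hahn--Banach/Riesz duality. Your Steps 2 and 3 reproduce the paper's computation exactly; the paper likewise first obtains the stronger bound $\|u\|^2\le \|f\|_{L^2(\Omega)}^2N_f/(\|f\|_{L^2(\Omega)}^2+\delta N_f)$ and then relaxes it to the stated form.

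The gap is in Step 1, which you flag as the main obstacle but do not carry out, and the mechanism you gesture at is not the one that works. The paper does not use a Poincar\'e inequality on the orthogonal complement of $\ker d^*$: the test forms in the duality step include $d$-closed elements of $\dom(d^*)$ whose harmonic parts (lying in $\ker d\cap\ker d^*$) are precisely \emph{not} in that complement, so an estimate valid only there cannot be fed into the duality argument without further work. What the paper actually exploits is that the Bochner identity (Lemma \ref{lem:bochner}) retains the full covariant derivative term $\int_\Omega|Dg|^2dV$ alongside the boundary term $\int_{\partial\Omega}\langle F_\rho g,g\rangle\,dS/|\nabla\rho|$; strict $p$-convexity gives $\langle F_\rho g,g\rangle\ge\lambda_2|g|^2$ on $\partial\Omega$ for $g\in\dom(d^*)$ (i.e.\ for tangential traces, $\nabla\rho\lrcorner g=0$), and the \emph{unconditional} trace--Sobolev inequality for $p$-forms
\begin{equation*}
\delta\int_\Omega|s|^2dV\le\int_\Omega|\nabla s|^2dV+\int_{\partial\Omega}|s|^2dS,\qquad \forall\, s\in C^1(\overline\Omega,\Lambda^pT^*M),
\end{equation*}
with no kernel or subspace restriction (Lemma \ref{lem:differential form}, imported from the Green-operator estimates of \cite{DHQ25}, or from Cianchi--Maz'ya when $M=\mr^n$), then converts $\|Dg\|^2+\int_{\partial\Omega}|g|^2dS$ into $\delta\lambda_1^{-2}\|g\|_{L^2(\Omega)}^2$. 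This is also exactly where the triviality of $E$ enters --- one needs a global frame and a uniform two-sided bound $\lambda_1^{-1}I\le h|_\Omega\le\lambda_1 I$ to pass from scalar-valued to $E$-valued forms (Corollary \ref{cor:improved L^2 estimate}) --- a hypothesis your sketch never invokes. So your inequality $(\star)$ is correct as stated, but as written your Step 1 is a placeholder rather than a proof, and the specific route you propose for it would fail on the harmonic fields.
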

 \indent %One need to note that the constant $\delta$ given in Theorem \ref{thm:flat vector bundle} does not depend on the vector bundle $E$.
 Note that the assumption $E$ is trivial is essential in our proof for Theorem \ref{thm:improved $L^2$-estimates}.
  When $M\neq \mr^n$, our proof of Theorem \ref{thm:improved $L^2$-estimates} strongly relies on the uniform estimates of the Green operator for differential forms which is proved  in \cite{DHQ25}. When $M=\mr^n$ ($n\geq 3$) and $h$ is a constant metric, the constant $\delta$ can be taken to be a  constant only depends on $n,|\Omega|$, $h$, and the smallest eigenvalue of the Hessian of $\rho$ on $\partial\Omega$.\\ 
 \indent Similar as \cite[Theorem 1.1]{DNWZ23}, we will construct the corresponding $L^2$-inverse of Theorem \ref{thm:flat vector bundle}.
 For simplicity of statements, we introduce the following notations: we say $M$ is a Riemannian manifold without boundary satisfying condition $\clubsuit$
 if one of the following holds:
 \begin{itemize}
   \item[(i)] $M$ is $p$-convex, and for any $x\in M$, there is a strictly $p$-plurisubharm-
   onic function $f\in C^2(M)$ such that $f(y)=d(x,y)^2$ for all $y$ in an open neighborhood of 
   $x$, where $1\leq p\leq \dim(M)$.
   \item[(ii)] $n:=\dim(M)\geq 2$ and $M$ has a strictly $p$-plurisubharmonic function $\eta\in C^2(M)$, where $p$ satisfies 
   $$\begin{cases}
     p=n, & \mbox{if } n=2, \\
     3\leq p\leq n, & \mbox{if }n\geq 3.
   \end{cases}$$
 \end{itemize}
By the proof of \cite[Lemma 12.15]{L18}, we know any $p$-convex open subset of a Cartan-Hadamard manifold satisfying condition $\clubsuit$.
%According to Gromoll and Meyer (see ), any complete noncompact Riemannian manifold of positive sectional curvature is 
\begin{thm}\label{thm:L^2 inverse}
 Let $M$ be a Riemannian manifold without boundary satisfying condition $\clubsuit$, and let $(E,h)$ be a flat vector bundle over $M$, which satisfies the following: for any strictly $p$-plurisubharmonic function $\psi\in C^2(M)$, and any $d$-closed $f\in  C^\infty_c(M,\Lambda^pT^*M\otimes E),$ 
there exists a Lebesgue measurable differential form $u$ such that 
$$du=f\text{ and }\int_{M}|u|^2e^{-\psi}dV\leq \int_{M}\langle F_\psi^{-1}f,f\rangle e^{-\psi}dV.$$
Then we have $\mathfrak{Ric}_p+\Theta^{(E,h)}\geq 0$.
\end{thm}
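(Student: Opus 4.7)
The proof strategy is a contradiction argument, modelled on the converse theorems for Berndtsson-type $L^2$-estimates (see \cite{DNWZ23} for the complex-geometric analogue). Suppose the conclusion fails, so there exist $x_0 \in M$ and $v_0 \in \Lambda^p T^*_{x_0}M \otimes E_{x_0}$ with $\langle (\mathfrak{Ric}_p + \Theta^{(E,h)}) v_0, v_0\rangle(x_0) < 0$. Since $(E,h)$ is flat, on a small geodesic ball $U := B(x_0, r_0)$ there is a parallel orthonormal frame, so $E|_U$ trivializes; equip $U$ with normal coordinates at $x_0$ and extend $v_0$ to a parallel $p$-form $\widetilde v_0$ on $U$ (constant coefficients in the chosen frames).

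Next, I would construct a family of localized $d$-closed test forms together with a concentrating family of strictly $p$-plurisubharmonic weights. Pick a $(p-1)$-form $\alpha_0$ with polynomial coefficients at $x_0$ satisfying $(d\alpha_0)(x_0) = v_0$, a cutoff $\chi_\varepsilon$ supported in $B(x_0, \varepsilon)$ and equal to $1$ on $B(x_0, \varepsilon/2)$, and set $f_\varepsilon := d(\chi_\varepsilon \alpha_0)$; then $f_\varepsilon$ is smooth, compactly supported in $U$, and automatically $d$-closed. Using condition $\clubsuit$, build weights $\psi_\varepsilon \in C^2(M)$ whose Hessians at $x_0$ are $t_\varepsilon\cdot I$ for a parameter $t_\varepsilon > 0$: in case (i) take $\psi_\varepsilon = t_\varepsilon\, d(x_0, \cdot)^2$ on $U$ and extend $p$-convexly to $M$; in case (ii) glue a local quadratic at $x_0$ to the global strictly $p$-psh $\eta$ via a partition of unity, using the dimensional restriction on $p$ to preserve strict $p$-plurisubharmonicity through the gluing.

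Feeding $(f_\varepsilon, \psi_\varepsilon)$ into the hypothesis produces $u_\varepsilon$ with $du_\varepsilon = f_\varepsilon$ and
$$\int_M |u_\varepsilon|^2 e^{-\psi_\varepsilon}\, dV \leq \int_M \langle F_{\psi_\varepsilon}^{-1} f_\varepsilon, f_\varepsilon\rangle e^{-\psi_\varepsilon}\, dV.$$
Rescaling $y = (x-x_0)/\varepsilon$ expands the right-hand side to leading order as $\varepsilon^n \langle F_{\psi_\varepsilon}^{-1}(x_0) v_0, v_0\rangle$ times an explicit universal integral, while the left-hand side admits an $\varepsilon^n$-order lower bound coming from the orthogonal projection of $\chi_\varepsilon \alpha_0$ onto the complement of $\ker d$. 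Sending $\varepsilon \to 0$ and $t_\varepsilon \to 0^+$ at a rate that keeps $F_{\psi_\varepsilon}$ barely invertible near $x_0$ reduces the resulting inequality to a statement about $\langle (\mathfrak{Ric}_p + \Theta^{(E,h)})(x_0) v_0, v_0\rangle$, which is negative by assumption---contradicting the positivity required for $F_{\psi_\varepsilon}^{-1}$ to remain well defined with a finite bound.

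The main technical obstacle is coordinating the competing scales $\varepsilon$, $t_\varepsilon$, and the Taylor remainder of $\nabla^2 \psi_\varepsilon$ so that $F_{\psi_\varepsilon}$ stays invertible on $\supp(f_\varepsilon)$ throughout the limit, yet the positive Hessian contribution from $\psi_\varepsilon$ does not overwhelm the negative curvature of $\mathfrak{Ric}_p + \Theta^{(E,h)}$. A further delicate point appears in case (ii) of $\clubsuit$: without a canonical local quadratic model such as the squared distance, the weight $\psi_\varepsilon$ must be constructed by gluing a local quadratic bump to the global strictly $p$-psh $\eta$ while preserving strict $p$-plurisubharmonicity---a genuine constraint on $p$-forms (as opposed to $1$-forms), which is precisely where the dimensional hypothesis $3 \leq p \leq n$ (or $p = n$ when $n = 2$) is used.
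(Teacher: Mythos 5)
Your overall shape (contradiction, a localized $d$-closed test form $d(\chi\alpha_0)$, a strictly $p$-plurisubharmonic weight concentrated at $x_0$, and a citation of the converse argument of \cite{DNWZ23}) matches the paper's, but the engine of the argument is missing. The only place in this framework where $\mathfrak{Ric}_p+\Theta^{(E,h)}$ can enter is the Bochner-type identity (Lemma \ref{lem:bochner without boundary}): the paper converts the hypothesis into the inequality
$$\int_{M}\langle (\mathfrak{Ric}_p+\Theta^{(E,h)})\alpha_m,\alpha_m\rangle e^{-\psi_m}dV+\int_{M}|D\alpha_m|^2e^{-\psi_m}dV\geq 0,\qquad \alpha_m:=F_{\psi_m}^{-1}f,$$
by pairing the hypothesized solution against test forms via Cauchy--Schwarz and then invoking the Bochner identity for the weighted metric $he^{-\psi_m}$ (this is where Equality (\ref{metric}) matters). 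Your proposal never invokes this identity; your asymptotic expansion of the right-hand side produces only $\langle F_{\psi_\varepsilon}^{-1}v_0,v_0\rangle$, which sees the Hessian of the weight but not the curvature of $(E,h)$, and your claimed lower bound on the left-hand side via ``the orthogonal projection onto the complement of $\ker d$'' is exactly the hard step you would need to carry out --- it is not supplied, and the paper deliberately avoids it by the duality route. Relatedly, your final ``contradiction'' is not one: the invertibility of $F_{\psi_\varepsilon}$ depends only on $\psi_\varepsilon$ being strictly $p$-plurisubharmonic, not on the sign of $\mathfrak{Ric}_p+\Theta^{(E,h)}$.

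Your scaling is also pointed the wrong way. The paper keeps the test form $f$ and its support radius $R$ \emph{fixed}, takes $\psi$ with $\psi=d(\cdot,x_0)^2-R^2$ near $x_0$ (so $\psi\leq 0$ exactly on $B(x_0,R)$), and sends $m\to\infty$ in $\psi_m=m\psi$: then $\alpha_m=\frac1m F_\psi^{-1}f$, the terms scale like $m^{-2}$, the weight $e^{-m\psi}\geq 1$ preserves the negative contribution on $B(x_0,R)$, and dominated convergence kills the annulus $B(x_0,2R)\setminus B(x_0,R)$. Sending $t_\varepsilon\to 0^+$ as you propose makes $F_{\psi_\varepsilon}^{-1}$ blow up and \emph{weakens} the hypothesis rather than sharpening it. Finally, in case (ii) of condition $\clubsuit$ a partition-of-unity gluing does not preserve strict $p$-plurisubharmonicity; the paper's Lemma \ref{lem:extension} instead uses a function with a logarithmic pole (the Green function for $n=2$, or $\ln(d(x,\cdot)^2/16R^2)$ for $p\geq 3$ via the Hessian comparison theorem) together with the regularized maximum $\max_{(1,1)}$. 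You correctly sensed that this is where the restriction on $p$ enters, but the construction you sketch would not go through as stated.
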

\indent When $p=1$, $M$ is a convex domain of $\mr^n$ and $E$ is trivial, Theorem \ref{thm:L^2 inverse}
is given as \cite[Theorem 1.3]{DZ21} 
%if one note that any convex domain of $\mr^n$ is strictly $p$-convex since any pseudoconvex domain of $\mc^n$ has a smooth strictly plurisubharmonic exhaustion function.  The proof of Theorem \ref{thm:L^2 inverse} is motivated by the proof of \cite[Theorem 1.1]{DNWZ23}.
We remark that the assumption $M$ satisfying condition $\clubsuit$ is important for our proof (to construct a good $\psi$).
% and one may weaken the assumption on$M$ by only requiring $M$ has a strictly $p$-plurisubharmonic function $\rho\in C^2(M)$.
 With the same method used in the proof of Theorem \ref{thm:L^2 inverse}, one can conclude that the constant $\delta$ in  Theorem \ref{thm:improved $L^2$-estimates}
cannot be chosen to be a constant which is independent of the metric $h$ when $M$ satisfying condition $\clubsuit$.\\
\indent For convenience, we introduce the following definition, which is similar to the $p$-optimal $L^2$-estimate of $\bar\partial$ that is introduced in \cite{DNWZ23}.
\begin{defn}
 Let $M$ be a Riemannian manifold without boundary, $E$ be a flat vector bundle over $M$, and let $h$ be a (possibly singular) Riemannian metric on 
 $E$. Then $(M,E,h)$ is said to satisfy the $p$-optimal $L^2$-estimate condition if it satisfies the following:  for any strictly $p$-plurisubharmonic function $\psi\in C^2(M)$, and any $d$-closed $f\in  C^\infty_c(M,\Lambda^pT^*M\otimes E),$ 
there exists a Lebesgue measurable differential form $u$ such that 
$$du=f\text{ and }\int_{M}h(u,u)e^{-\psi}dV\leq \int_{M}h(F_\psi^{-1}f,f)e^{-\psi}dV,$$
where we have extended the metric $h$ to the tensor bundle $\Lambda^pT^*M\otimes E$, and which is still denoted by 
 $h.$
\end{defn} 
\indent By Theorem \ref{thm:L^2 inverse}, the proof of Theorem \ref{thm:flat vector bundle} and use Equality (\ref{metric}), we have the following:
\begin{cor}\label{cor:equivalent}
Let $M$ be a Riemannian manifold without boundary satisfying condition $\clubsuit$, 
and let $(E,h)$ be a flat vector bundle over $M$. Then $(M,E,h)$ satisfies the $p$-optimal $L^2$-estimate condition if and only  if 
$\mathfrak{Ric}_p+\Theta^{(E,h)}\geq 0.$
\end{cor}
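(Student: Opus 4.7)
The plan is to prove the two implications separately; each reduces, after a small amount of bookkeeping, to a result already stated in the excerpt.

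For the $(\Rightarrow)$ direction, suppose $(M,E,h)$ satisfies the $p$-optimal $L^2$-estimate condition. Since $M$ satisfies condition $\clubsuit$, this is precisely the hypothesis of Theorem \ref{thm:L^2 inverse}; invoking that theorem directly yields $\mathfrak{Ric}_p+\Theta^{(E,h)}\geq 0$, with no further work required.

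For the $(\Leftarrow)$ direction, assume $\mathfrak{Ric}_p+\Theta^{(E,h)}\geq 0$, and fix a strictly $p$-plurisubharmonic $\psi\in C^2(M)$ together with a $d$-closed $f\in C^\infty_c(M,\Lambda^pT^*M\otimes E)$. I would introduce the conformally rescaled metric $h_\psi := h\,e^{-\psi}$ on the same flat bundle $E$. Equality (\ref{metric}), the change-of-metric formula for $\Theta^{(E,h)}$, then gives, as endomorphisms of $\Lambda^pT^*M\otimes E$,
$$\mathfrak{Ric}_p+\Theta^{(E,h_\psi)} \;=\; \mathfrak{Ric}_p+\Theta^{(E,h)}+F_\psi \;\geq\; F_\psi \;>\; 0.$$
Thus the curvature hypothesis of Theorem \ref{thm:flat vector bundle} holds for $(E,h_\psi)$, and the required integrability is automatic because $f$ has compact support and $F_\psi$ is strictly positive on $\supp f$. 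Applying that theorem with $h_\psi$ in place of $h$ produces $u$ with $du=f$ and
$$\int_M h_\psi(u,u)\,dV \;\leq\; \int_M h_\psi\!\left((\mathfrak{Ric}_p+\Theta^{(E,h_\psi)})^{-1}f,f\right)dV.$$
Operator monotonicity of inversion, applied to the lower bound $\mathfrak{Ric}_p+\Theta^{(E,h_\psi)}\geq F_\psi$, yields $(\mathfrak{Ric}_p+\Theta^{(E,h_\psi)})^{-1}\leq F_\psi^{-1}$; substituting $h_\psi=he^{-\psi}$ on both sides gives exactly the desired $p$-optimal estimate
$$\int_M h(u,u)\,e^{-\psi}\,dV \;\leq\; \int_M h(F_\psi^{-1}f,f)\,e^{-\psi}\,dV.$$
Smoothness of $u$ when $f$ is smooth is inherited from Theorem \ref{thm:flat vector bundle}.

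The only substantive point I anticipate is verifying Equality (\ref{metric}) with the right calibration: namely, that the correction term produced by replacing $h$ by $he^{-\psi}$ in $\Theta^{(E,h)}$ is exactly the operator $F_\psi$ appearing in the definition of the $p$-optimal $L^2$-estimate condition. This should be a short direct computation from the definition of $\Theta^{(E,h)}$ for a flat bundle together with the product rule for the Chern-type connection, but it is the identity on which the whole equivalence pivots.
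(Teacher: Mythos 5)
Your proposal is correct and follows essentially the same route as the paper, whose proof of Corollary \ref{cor:equivalent} is exactly the one-line recipe you flesh out: Theorem \ref{thm:L^2 inverse} gives the forward implication, and Equality (\ref{metric}) applied to the rescaled metric $he^{-\psi}$ together with Theorem \ref{thm:flat vector bundle} gives the converse. The only cosmetic difference is that the paper invokes the \emph{proof} of Theorem \ref{thm:flat vector bundle} (inserting the lower bound $\mathfrak{Ric}_p+\Theta^{(E,he^{-\psi})}\geq F_\psi$ directly into the a priori inequality), whereas you invoke its \emph{statement} and then use monotonicity of Demailly's generalized inverse, which is an equivalent and equally valid step.
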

\indent In \cite{L17}, Lempert asked whether a Hermitian metric whose curvature dominates zero is Nakano semi-positive.
This question is answered affirmatively by Liu-Xiao-Yang-Zhou in \cite{LXYZ24} by using the $p$-optimal $L^2$-estimate of $\bar\partial$ (see Proposition 2.14 there). With the same idea as in \cite{LXYZ24}, using Theorem \ref{thm:L^2 inverse}, we can give the following approximation result:
\begin{thm}\label{thm:limit metric}
  Let $M$ be a Riemannian manifold without boundary satisfying condition $\clubsuit$, and let $E$ be a flat vector bundle over $M$.
 Suppose $\{h_j\}_{j=1}^\infty $ is an increasing sequence of Riemannian metrics of class $C^2$ on $E$ such that $\mathfrak{Ric}_p+\Theta^{(E,h_j)}\geq 0$,
 and let $h:=\lim_{j\rw \infty}h_j$. Then $(M,E,h)$ satisfies the $p$-optimal $L^2$-estimate condition.
In particular, if $h$ is of class $C^2$, then $\mathfrak{Ric}_p+\Theta^{(E,h)}\geq 0$.
\end{thm}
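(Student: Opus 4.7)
The plan is to imitate the argument of \cite{LXYZ24} (Proposition 2.14 there) in the present flat-bundle setting, with Theorem \ref{thm:flat vector bundle} replacing the analogous $\dbar$-estimate and Theorem \ref{thm:L^2 inverse} replacing the corresponding Nakano positivity characterization. Fix a strictly $p$-plurisubharmonic $\psi\in C^2(M)$ and a $d$-closed $f\in C^\infty_c(M,\Lambda^pT^*M\otimes E)$. For each $j$, I would apply Theorem \ref{thm:flat vector bundle} to the twisted flat metric $h_j e^{-\psi}$, using the standard twisting identity
$$\Theta^{(E,h_j e^{-\psi})}=\Theta^{(E,h_j)}+F_\psi,$$
so that $\mathfrak{Ric}_p+\Theta^{(E,h_j e^{-\psi})}\geq F_\psi>0$ on $\supp f$. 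This produces $u_j$ with $du_j=f$ and
$$\int_M h_j(u_j,u_j)e^{-\psi}dV\leq\int_M h_j\bigl((\mathfrak{Ric}_p+\Theta^{(E,h_j)}+F_\psi)^{-1}f,f\bigr)e^{-\psi}dV.$$
Operator monotonicity of inversion bounds the integrand on the right by $h_j(F_\psi^{-1}f,f)\leq h(F_\psi^{-1}f,f)$, and since $f$ has compact support the final quantity is finite. In particular, $(u_j)$ is uniformly bounded in $L^2(M;he^{-\psi}dV)$ and, a fortiori, in $L^2(h_1e^{-\psi})$ because $h_1\leq h_j$.

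Next I would pass to a weak limit. Along a subsequence $u_j\rightharpoonup u$ in $L^2(h_1e^{-\psi})$; since weak $L^2$-convergence implies distributional convergence, the equation $du=f$ survives the limit. For the norm, fix $k$ and note that for $j\geq k$ the family $(u_j)$ is also bounded in $L^2(h_ke^{-\psi})$ (again from $h_k\leq h_j$). Weak lower semicontinuity in that Hilbert space gives
$$\int_M h_k(u,u)e^{-\psi}dV\leq\liminf_{j\to\infty}\int_M h_k(u_j,u_j)e^{-\psi}dV\leq\liminf_{j\to\infty}\int_M h_j(u_j,u_j)e^{-\psi}dV\leq\int_M h(F_\psi^{-1}f,f)e^{-\psi}dV.$$
Sending $k\to\infty$ and applying the monotone convergence theorem on the left (using $h_k(u,u)\nearrow h(u,u)$) yields
$$\int_M h(u,u)e^{-\psi}dV\leq\int_M h(F_\psi^{-1}f,f)e^{-\psi}dV,$$
which is precisely the $p$-optimal $L^2$-estimate condition for $(M,E,h)$. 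The ``in particular'' clause is then immediate from Theorem \ref{thm:L^2 inverse}, whose hypothesis is exactly the $p$-optimal $L^2$-estimate condition together with $h\in C^2$ and the $\clubsuit$ assumption on $M$.

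The step I expect to need the most care is the twisting identity $\Theta^{(E,he^{-\psi})}=\Theta^{(E,h)}+F_\psi$ used above: one must verify that in the flat-bundle/$p$-form setting developed in Section \ref{sec:flat} the Hessian contribution to the curvature of a conformally rescaled metric is \emph{exactly} the operator $F_\psi$ that appears on the right-hand side of Theorem \ref{thm:flat vector bundle}. This should follow directly from the Bochner-type computation in Lemma \ref{lem:bochner}, but it must be recorded explicitly. A secondary technical point is ensuring the distributional identity $du=f$ survives the weak limit in an incomplete setting---this is routine, but it is the reason for extracting the subsequence in the common space $L^2(h_1e^{-\psi})$ rather than in the moving spaces $L^2(h_je^{-\psi})$.
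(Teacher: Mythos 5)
Your proposal is correct and follows essentially the same route as the paper: solve $du_j=f$ for each $h_j$ via the twisted estimate $\Theta^{(E,h_je^{-\psi})}=\Theta^{(E,h_j)}+F_\psi$ (which the paper packages as Corollary \ref{cor:equivalent} rather than re-deriving from Theorem \ref{thm:flat vector bundle}), extract a weak limit in a fixed space using the monotonicity $h_k\leq h_j$, and conclude by weak lower semicontinuity together with monotone convergence (the paper uses Fatou's lemma, which is the same step here). The only cosmetic difference is that you unfold the corollary and record the operator-monotonicity step $\bigl(\mathfrak{Ric}_p+\Theta^{(E,h_j)}+F_\psi\bigr)^{-1}\leq F_\psi^{-1}$ explicitly.
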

\indent With Theorem \ref{thm:L^2 inverse} in hand, we can generalize the classical Pr\'ekopa's theorem in convex analysis.
\begin{thm}\label{thm:prekopa theorem}
  Let $M$ be a Riemannian manifold without boundary  satisfying condition $\clubsuit$, 
  $N$ be a $p$-convex Riemannian manifold of dimension $\geq p$,
and let $(E,\tilde{h})$ be a flat vector bundle over 
 $M\times N$ such that $\mathfrak{Ric}_p^{M\times N}+\Theta^{(E,\tilde{h})}\geq 0.$ 
 Define a metric $h$ on $E$ via   
 $$h_x:=\int_{N}\tilde{h}_{(x,y)} dV(y),\ \forall x\in M,$$
 then $(M,E,h)$ satisfies the $p$-optimal $L^2$-estimate condition. In particular, if $h$ is of class $C^2$,
 then $\mathfrak{Ric}_p^M+\Theta^{(E,h)}\geq 0$. 
\end{thm}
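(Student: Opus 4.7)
By Corollary~\ref{cor:equivalent} (via Theorem~\ref{thm:L^2 inverse}), once we verify that $(M,E,h)$ satisfies the $p$-optimal $L^2$-estimate condition, the ``in particular'' statement about the curvature follows automatically. Accordingly, fix a strictly $p$-plurisubharmonic $\psi\in C^2(M)$ and a $d$-closed $f\in C^\infty_c(M,\Lambda^p T^*M\otimes E)$; the goal is to exhibit a measurable $u$ with $du=f$ satisfying
$$\int_M h(u,u)e^{-\psi}dV\leq \int_M h(F_\psi^{-1}f,f)e^{-\psi}dV.$$

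The plan is to follow the $L^2$-pushforward strategy familiar from the complex-analytic direct-image / Pr\'ekopa theorems: lift the problem to $M\times N$, solve the $d$-equation there via Theorem~\ref{thm:flat vector bundle}, and push the solution back to $M$ by integrating along the $N$-fibre. Let $\pi:M\times N\rw M$ denote the projection and set $\tilde\psi:=\pi^*\psi$, $\tilde f:=\pi^*f$. Then $\tilde\psi$ is $p$-plurisubharmonic on $M\times N$, so combined with the hypothesis we obtain
$$\mathfrak{Ric}_p^{M\times N}+\Theta^{(E,\tilde h e^{-\tilde\psi})}\geq 0$$
(if strictness along the $N$-direction is needed to apply the theorem, add $\epsilon\eta$ for an auxiliary strictly $p$-psh function on $M\times N$, which exists because $M$ satisfies condition $\clubsuit$ and $N$ is $p$-convex, and let $\epsilon\searrow 0$ at the end). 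Applying Theorem~\ref{thm:flat vector bundle} on $M\times N$ with the weighted metric $\tilde h e^{-\tilde\psi}$ produces the minimal $L^2$-norm solution $\tilde u$ of $d\tilde u=\tilde f$, orthogonal in the weighted inner product to $\ker d$.

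Decompose $\tilde u=u_H+u_V$ into its horizontal part $u_H\in\pi^*(\Lambda^p T^*M)\otimes E$ (containing no $dy$-factors) and a vertical remainder, and define
$$u(x):=\int_N u_H(x,y)\,dV_N(y),$$
a section of $\Lambda^p T^*M\otimes E$ over $M$. Writing $d=d_M+d_N$ and integrating $d\tilde u=\tilde f$ over $N$, Fubini together with integration by parts on $N$ (the $d_N$-contribution to the horizontal part yields a boundary term that vanishes by the $L^2$-minimality of $\tilde u$ and an exhaustion of $N$ by relatively compact $p$-convex subdomains) gives $du=f$. Cauchy-Schwarz applied to the defining identity $h_x=\int_N\tilde h_{(x,y)}dV_N(y)$ yields
$$\int_M h(u,u)e^{-\psi}dV_M\leq\int_{M\times N}\tilde h(\tilde u,\tilde u)e^{-\tilde\psi}dV_{M\times N},$$
and a matching pointwise Jensen-type inequality, exploiting the convexity of $A\mapsto\langle A^{-1}v,v\rangle$ on the cone of positive symmetric operators, bounds the right-hand side of the lifted estimate by $\int_M h(F_\psi^{-1}f,f)e^{-\psi}dV_M$; this closes the $L^2$-estimate on $M$.

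The main obstacle will be precisely this pointwise Jensen-type comparison
$$h_x\bigl(F_\psi^{-1}f(x),f(x)\bigr)\leq\int_N\tilde h_{(x,y)}\bigl(F_{\tilde\psi}^{-1}\tilde f(x,y),\tilde f(x,y)\bigr)dV_N(y),$$
which is the operator-theoretic heart of the Pr\'ekopa phenomenon and requires a careful unpacking of how $F_{\tilde\psi}$ splits on $M\times N$ when restricted to horizontal forms, plus a Minkowski-type inequality for inverses of sums of positive operators. A secondary technical difficulty is the non-compactness of $N$: the minimal solution $\tilde u$ and its horizontal/vertical decomposition must be built via $p$-convex exhaustion and limit-passage, and one must verify that both the vanishing of the Fubini-Stokes boundary term and the identity $du=f$ persist through the limit.
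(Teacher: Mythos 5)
Your overall route is the same as the paper's: lift $f$ and $\psi$ to $M\times N$, absorb the weight into the metric via Equality~(\ref{metric}), solve $du=f$ on the product by Theorem~\ref{thm:flat vector bundle}, and push the solution down to $M$ by fiber integration and Fubini. However, two of your steps do not survive scrutiny, and you have located the difficulty in the wrong place.

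First, the pushforward $u(x):=\int_N u_H(x,y)\,dV(y)$ is incompatible with the definition $h_x=\int_N\tilde h_{(x,y)}\,dV(y)$, and the inequality $\int_M h(u,u)e^{-\psi}dV\leq\int_{M\times N}\tilde h(\tilde u,\tilde u)e^{-\tilde\psi}dV$ that you derive from ``Cauchy--Schwarz'' is false for this unweighted fiber integral. Take $E$ the trivial line bundle, $\tilde h\equiv 1$ and $u_H$ independent of $y$: then $h_x=|N|$, $u=|N|u_H$, so $h_x(u,u)=|N|^3|u_H|^2$ while $\int_N\tilde h(u_H,u_H)dV(y)=|N|\,|u_H|^2$, and your inequality fails by a factor $|N|^2$ whenever $|N|>1$. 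For the same reason your Fubini--Stokes computation yields $du=|N|f$ rather than $du=f$, since the horizontal part of $\tilde f$ integrates to $|N|f$ over the fiber. The pushforward matched to $h$ is the $\tilde h$-weighted fiber average, i.e.\ $u(x)$ determined by $h_x(u(x),\cdot)=\int_N\tilde h_{(x,y)}(u_H(x,y),\cdot)\,dV(y)$, for which Cauchy--Schwarz does give the desired $L^2$ bound --- but then the verification of $du=f$ is no longer a routine Fubini argument and becomes the genuinely delicate point (it is here that the minimality/orthogonality of $\tilde u$ must really be used). The paper avoids both problems at once by regarding the solution on $M\times N$ as a form on $M$ independent of $y$, in which case Fubini converts the product estimate into the estimate on $M$ with equalities in place of your two inequalities.

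Second, the step you single out as ``the operator-theoretic heart'', namely $h_x(F_\psi^{-1}f,f)\leq\int_N\tilde h_{(x,y)}(F_{\tilde\psi}^{-1}\tilde f,\tilde f)\,dV(y)$, is not an obstacle: $F_{\tilde\psi}$ acts only on the differential-form factor, does not involve the bundle metric, and on horizontal forms over the product metric coincides with $F_\psi$, so $F_{\tilde\psi}^{-1}\tilde f$ is just the pullback of $F_\psi^{-1}f$ and is independent of $y$. Since $\tilde h\mapsto\tilde h(v,w)$ is linear in the metric and $h_x=\int_N\tilde h_{(x,y)}dV(y)$, the two sides are \emph{equal} by Fubini; no convexity of $A\mapsto\langle A^{-1}v,v\rangle$ enters, because it is the metric that is averaged here, not the operator $F_\psi$. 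In short, the Pr\'ekopa-type difficulty sits entirely in the pushforward of $u$, which is precisely where your argument breaks.
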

\indent When $M$ and $N$ are Euclidean spaces and $p=1$, Theorem \ref{thm:prekopa theorem}
is given in \cite{R13}. When $M$ and $N$ are convex domains of Euclidean spaces and $p=1$, then Theorem \ref{thm:prekopa theorem}
is a special case of \cite[Theorem 1.4]{DHJ23}. It is also possible to state and prove a version of Theorem \ref{thm:prekopa theorem} 
with group action as in \cite{DHJ23}, but we don't pursue this here. The proof of Theorem \ref{thm:prekopa theorem} is inspired by the proof 
of \cite[Theorem 1.6]{DZ21}. 
\begin{comment}
 When $M$ is strictly $p$-convex, then we also have the following:
\begin{thm}\label{thm:limit metric1}
  Let $M$ be a strictly $p$-convex Riemannian manifold without boundary, and let $E$ be a flat vector bundle over $M$.
 Suppose $\{h_j\}_{j=1}^\infty $ is an increasing sequence of Riemannian metrics of class $C^2$ on $E$ such that $\Theta^{(E,h_j)}\geq 0$,
 then the limit metric $h:=\lim_{j\rw \infty}h_j$ also satisfies $\Theta^{(E,h)}\geq 0$.
\end{thm}
\indent When $M\subset\mr^n$ is a convex domain, Theorem \ref{thm:limit metric} is given as 
Theorem 1.2 of \cite{Z23}.\\
\indent Use Theorem \ref{thm:limit metric1}, one may give the definition of Nakano-semipositivity of a singular Riemannian metric $h$.
\begin{defn}
 Let $M$ be a strictly $p$-convex Riemannian manifold without boundary,  $E$ be a flat vector bundle over $M$, and let $h$ be a singular Riemannian metric on  $E$, then $h$ is said to be Nakano $p$-semipositive if there is an increasing sequence $\{h_j\}_{j=1}^\infty$ of Riemannian metrics of class 
 $C^2$ on $E$ such that $\Theta^{(E,h_j)}\geq 0$ for any $j$ and $h=\lim_{j\rw\infty}h_j.$
\end{defn}
\end{comment}
\ 
\subsection*{Acknowledgements}
    		The authors are grateful to Professor Fusheng Deng, their Ph.D. advisor, for valuable discussions on related topics.
\ \\
\section{Notations and conventions}\label{sec:notations}

\par In this section, we fix some notations and conventions that are needed in our discussions.\\
\indent Our convention for $\mathbb{N}$ is $\mathbb{N}:=\{0,1,2,\cdots\}$. Let $T$ be a symmetric linear transformation on an inner space $(V,\langle\cdot,\cdot\rangle)$, then we write $T\geq 0$ (resp. $T>0$) if 
$\langle Tu,u\rangle\geq 0$ (resp. $\langle Tu,u\rangle>0$) for all $u\in  V\setminus\{0\}.$\\ 
\indent When we say $M$ is a Riemannian manifold, we mean it is smooth, oriented, and which is equipped with a smooth Riemannian metric.
Its dimension is denoted by $\dim(M)$.  %Moreover, we always fix $p\in\{1,\cdots,n\}$ in this article. 
The Lebesgue volume form on $M$ is denoted by $dV$, and the geodesic distance function on $M$ is denoted by 
$d(\cdot,\cdot)$. For any $x\in M$, and $r>0$, we set 
$$B(x,r):=\{y\in M|\ d(x,y)<r\}.$$ 
Let $A$ be a subset of $M$, then we write $\overline A$ for the closure of $A$ in $M$, and 
write $A\subset\subset M$ if $\overline A$ is a compact subset of $M$. If $A$ is a connected open set,
then we say $A$ is a domain of $M$.
Moreover, we use $|A|$ to denote the Lebesgue measure of $A$, and use $\diam(A)$ to denote the diameter of $A$.
When we say $(E,h)$ is a vector bundle over $M$, we mean $h$ is a $C^2$ Riemannian metric on $E$, and the metric on each 
fiber of $E$ is denoted by $\langle\cdot,\cdot\rangle$. If $(E_1,h_1),(E_2,h_2)$ are two vector bundles over $M$, 
then there is of course a vector bundle $(E_1\otimes E_2,h_1\otimes h_2)$ over $M$, where $h_1\otimes h_2$ is the tensor product of
metrics $h_1$ and $h_2$.\\
\indent Let $M$ be a Riemannian manifold without boundary, we use $TM:=\coprod_{x\in M}T_xM$ to denote the tangent bundle of $M$,
and $T^*M:=\coprod_{x\in M}T_x^*M$ to denote the cotangent bundle. The bundle of smooth $p$-forms on $M$ 
is denoted by $\Lambda^pT^*M.$ We always use $\nabla$ to denote the Levi-Civita connection on $M$,
 and we use $R$ to represent the Riemannian curvature tensor, i.e. 
 $$R(X,Y):=\nabla_X\nabla_Y-\nabla_Y\nabla_X-\nabla_{[X,Y]},$$
 where $X,Y$ are vector fields on $M$ of class $C^1$, and $[\cdot,\cdot]$ is the Lie bracket.
 Let $E$ be a smooth vector bundle over $M$, $k\in\mathbb{N}\cup\{\infty\}$,
 and let $U\subset\subset M$ be an open subset, then we use $C^k(U,E)$ denote the space of all sections of $E$ on $U$ which
 are of class $C^k$, and we use $C^k(U,E)$ (resp. $C_c^k(U,E)$) denote the space of all sections (resp. all compactly supported sections) of $E$ on an open neighborhood of $\overline{U}$ which
 are of class $C^k$.  Similarly, we use $L^2(U,E)$ (resp. $L_{\loc}^2(U,E)$) to denote of all Lebesgue measurable sections of $E$ on $U$ which
 are $L^2$-integrable (resp. locally $L^2$-integrable) on $U$ when $E$ has a Riemannian metric. When $E:=M\times \mc$, we write 
 $C^k(U):=C^k(U,E)$, and similarly for $C^k(\overline{U}).$ For any $f\in C^1(U)$, we use $\nabla f$ to denote the gradient of 
 $f$.\\
 \indent Let $(M,g)$ be a Riemannian manifold without boundary, then we use $d$ to represent the usual de Rham operator on $M$
  which acts on differential forms.
    In local coordinate, we may write 
    $$g=\sum_{i,j=1}^n g_{ij}dx_i\otimes dx_j,$$
    then we  use $(g^{ij})_{1\leq i,j\leq n}$ to represent the inverse matrix of $(g_{ij})_{1\leq i,j\leq n}$.
    For any $k\in\mathbb{N}$, let $\phi,\psi$ be two complex valued measurable $k$-forms on $M$, which have local expressions
    $$\phi=\frac{1}{k!}\sum_{1\leq i_1,\cdots,i_k\leq n}\phi_{i_1\cdots i_k}dx_{i_1}\wedge\cdots\wedge dx_{i_k},$$
    $$\psi=\frac{1}{k!}\sum_{1\leq j_1,\cdots,j_k\leq n}\psi_{j_1\cdots j_k}dx_{j_1}\wedge\cdots\wedge dx_{j_k},$$
    then we define 
   $$\langle \phi,\psi\rangle:=\frac{1}{k!}g^{i_1j_1}\cdots g^{i_k j_k}\phi_{i_1\cdots i_k}\overline{\psi_{j_1\cdots j_k}},$$
   and we also set 
   $$|\phi|^2:=\langle \phi,\phi\rangle.$$
   This definition is independent of the choice of local coordinates.\\
 \indent Let $M$ be a Riemannian manifold without boundary.
Let $X_1,\cdots,X_n$ be a local orthonormal frame of $TM$, and let $X_1^*,\cdots,X_n^*$ be the dual frame of $X_1,\cdots,X_n$. 
For any $\varphi\in C^2(M)$, set
$$\nabla^2\varphi:=\sum_{i,j=1}^n\varphi_{ij}X_i^*\otimes X_j^*,$$
and define
$$F_\varphi:=\sum_{i,j=1}^n \varphi_{ij}X_i^*\wedge X_j\lrcorner,$$
where $\lrcorner$ is the interior product. \\
\indent Let $M$ be a Riemannian manifold without boundary. Let $\Delta_p$ be the Hodge-Laplacian which acts 
 on $C^2(M,\Lambda^pT^*M)$, and let $-\Delta_0$ be the usual Laplace-Beltrami operator which acts on $C^2(M)$. For any $f\in C^2(M,\Lambda^pT^*M)$, the Weitzenb\"ock curvature operator $\mathfrak{Ric}_p$ on $f$ is defined by 
$$(\mathfrak{Ric}_pf)(Y_1,\cdots,Y_k):=\sum_{i=1}^k\sum_{j=1}^n (R(X_j,Y_i)f)(Y_1,\cdots,Y_{i-1},X_j,Y_{i+1},\cdots,Y_k),$$
where $X_1,\cdots,X_n$ is a local orthonormal frame of $TM$, $Y_1,\cdots,Y_k$ are vector fields on $M$ of class $C^1.$ 
When there are two more manifolds, we write $\mathfrak{Ric}_p^M$ for the Weitzenb\"ock curvature operator
for clarity. For any differential operator $P$ on $M$, we always use $P^*$ to denote the formal adjoint of $P$, and 
 use $\operatorname{Dom}(P)$ to denote the domain of $P$.
By the Weitzenb\"ock formula, we have 
$$\Delta_p=\nabla^*\nabla+\mathfrak{Ric}_p.$$
\indent Now let us recall the definitions of $p$-plurisubharmonic functions and $p$-convex manifolds 
 (see \cite{JLY14}). Suppose $M$ is a Riemannian manifold without boundary, then a real valued function $f\in C^2(M)$ is (strictly) $p$-plurisubharmonic 
  if the sum of any $p$-eigenvalues of $\nabla^2f$ is (strictly) greater than $0$.  
  $M$ is called (strictly) $p$-convex if  it has a
  (strictly) $p$-plurisubharmonic exhaustion function $\rho\in C^2(M).$ 
  Let $\Omega\subset\subset M$ be an open subset with a $C^k$-boundary defining function $\rho$ for some 
  $k\geq 2$, then $\partial\Omega$ is called  $p$-convex (resp. strictly $p$-convex) if for any $x\in\partial\Omega$, and any 
  $u\in \Lambda^p T_x^*M\setminus\{0\}$ such that $\nabla\rho\lrcorner u=0$, we have $\langle F_{\rho}u,u\rangle\geq 0$ (resp. $>0$). 
  Then we say $\rho$ is a $C^k$ $p$-convex (resp. strictly $p$-convex) boundary defining function of $\Omega$.
\section{The proof of Theorem \ref{thm:flat vector bundle}}\label{sec:flat}

\par To prove Theorem \ref{thm:flat vector bundle}, let us firstly introduce some notations.\\
\indent Let $(E,h)$ be a flat vector bundle of rank $r$ over $M$, which means that we can find a covering $\{U_\alpha\}_{\alpha\in I}$ of $M$ and a local frame 
$e_{\alpha,1},\cdots, e_{\alpha,r}$ of $E$ on $U_\alpha$ for each $\alpha$ such that the transition function between $e_{\alpha,1},\cdots,e_{\alpha,r}$ and $e_{\beta,1},\cdots,e_{\beta,r}$ are locally constant if $U_\alpha\cap U_\beta\neq \emptyset$.
For any $f\in C^1(M,\Lambda^pT^*M\otimes E)$ and any $\alpha\in I$, write 
$$f|_{U_\alpha}=\sum_{i=1}^r s^i\otimes e_{\alpha,i},$$
then we define 
$$df|_{U_\alpha}:=\sum_{i=1}^r ds^i\otimes e_{\alpha,i},\ D f|_{U_\alpha}:=\sum_{i=1}^r \nabla s^i\otimes e_{\alpha,i}.$$
Clearly, $df$ and $D f$ are globally defined. 
%We also use $d^*$ to denote the formal adjoint of $d$. 
%Set 
%$$h_{\alpha,ij}:=h(e_{\alpha,i},e_{\alpha,j}), \ 1\leq i,j\leq r,$$
%and let $(h_{\alpha}^{ij})_{1\leq i,j\leq r}$ be the inverse matrix of $(h_{\alpha,ij})_{1\leq i,j\leq r}.$
%For any $1\leq i\leq r$, define 
%$$De_{\alpha,i}:=\frac{1}{2}\sum_{j,k=1}^r h_{\alpha}^{jk} dh_{\alpha,ij}\otimes e_{\alpha,k},$$
%then $D$ is a well defined metric connection on $E$. Moreover, we know the curvature $D^2=0$. 
For any $f\in L^2_{\loc}(M,\Lambda^pT^*M\otimes E)$, write 
$$f|_{U_\alpha}=\sum_{i=1}^r s^i\otimes e_{\alpha,i},$$
and define  
$$\mathfrak{Ric}_pf|_{U_\alpha}:=\sum_{i=1}^r \mathfrak{Ric}_ps^i\otimes e_{\alpha,i},$$
then we know $\mathfrak{Ric}_p$ is a well defined operator. Furthermore, $\langle \mathfrak{Ric}_pf,f\rangle$ only depends on the curvature of $M$, the metric $h$ and $f$. 
%Obviously, we have $$\mathfrak{Ric}_p=\mathfrak{Ric}_p^D.$$
Define 
$$
\Theta^{(E,h)}f|_{U_\alpha}:=-\sum_{i,j,k=1}^rdh_{\alpha}^{ij}\wedge\left(\nabla h_{\alpha,jk}\lrcorner s^k\right)\otimes e_{\alpha,i} -\sum_{i,j,k=1}^rh_{\alpha}^{ij} F_{h_{\alpha,jk}}s^k\otimes e_{\alpha,i},
$$
then $\Theta^{(E,h)}f$ is globally defined, and $\Theta^{(E,h)}f$ only depends on the metric $h$ and $f$.
Clearly, $\mathfrak{Ric}_p$ and $\Theta^{(E,h)}$ are symmetric in the following sense
$$\langle \mathfrak{Ric}_pf,g\rangle=\langle f,\mathfrak{Ric}_pg\rangle,\ \langle \Theta^{(E,h)}f,g\rangle=\langle f,\Theta^{(E,h)}g\rangle,$$
where $f,g\in L^2_{\loc}(M,\Lambda^pT^*M\otimes E)$. Use \cite[Lemma 7.1]{JLY14},
then we have  
$$ d(h_\alpha^{jk}\nabla h_{\alpha,ij}\lrcorner s^i)\otimes e_{\alpha,k}
 +h_\alpha^{jk}\nabla h_{ij}\lrcorner ds^i\otimes e_{\alpha,k}=
 h_{\alpha}^{jk}\nabla_{\nabla h_{\alpha,ij}}s^i\otimes e_{\alpha,k}+\Theta^{(E,h)}f,$$
so one can easily prove that 
\begin{equation}\label{metric}
\Theta^{(E,he^{-\varphi})}=\Theta^{(E,h)}+F_\varphi,\ \forall \varphi\in C^2(M).
\end{equation}
\indent Suppose $T$ is a symmetric linear transformation on $L^2_{\loc}(M,\Lambda^pT^*M\otimes E)$ such that 
for almost every $x\in M$,  
and  there exists $\alpha\in[0,+\infty)$ such that 
$$\langle Tu,u\rangle\geq 0,\ |\langle f(x),u\rangle|^2\leq \alpha \langle Tu,u\rangle,
\ \forall u\in (\Lambda^pT^*M\otimes E)_x,$$
then we define $\langle T^{-1}f(x),f(x)\rangle$ to be the minimal such number $\alpha$. This definition is given by 
Demailly in Page 371 of  \cite{D12}.\\  
\indent The following lemma plays a fundamental role in our proof of Theorem \ref{thm:flat vector bundle}.
\begin{lem}\label{lem:bochner}
Let $M$ be a Riemannian manifold without boundary of dimension $n$, $(E,h)$ be a flat vector bundle over $M$, and let $\Omega\subset\subset M$ be an open subset with a $C^2$-boundary defining function $\rho$.
 Then for any $f\in C^1(\overline\Omega,\Lambda^pT^*M\otimes E)\cap\operatorname{Dom}(d^*),$
we have 
\begin{align*}
\int_{\Omega}|df|^2dV+\int_{\Omega}|d^*f|^2dV&=\int_{\Omega}|D f|^2dV+\int_{\Omega}\left\langle 
\left(\mathfrak{Ric}_p+\Theta^{(E,h)}\right)f,f\right\rangle dV\\
&\quad +\int_{\partial \Omega}\langle F_\rho f,f\rangle\frac{dS}{|\nabla\rho|}.
\end{align*}
\end{lem}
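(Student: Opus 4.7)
The plan is to mimic the proof of the analogous identity for a metric connection (the commented-out lemma above) but to work in a flat local frame of $(E,h)$, so that all the extra terms arising from the non-parallelism of $h$ assemble precisely into the operator $\Theta^{(E,h)}$. By a standard density argument, I may assume $f\in C^2(\overline\Omega,\Lambda^pT^*M\otimes E)$.

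In a flat frame $e_{\alpha,1},\ldots,e_{\alpha,r}$ on $U_\alpha$, write $f=\sum_i s^i\otimes e_{\alpha,i}$. A direct integration by parts against compactly supported test forms yields the local formula
\[
d^*f|_{U_\alpha}=\sum_l d^*s^l\otimes e_{\alpha,l}-\sum_{l,i,j}h_\alpha^{li}(\nabla h_{\alpha,ij}\lrcorner s^j)\otimes e_{\alpha,l},
\]
so the Hodge Laplacian $\Delta:=dd^*+d^*d$ on $f$ equals $\sum_l(dd^*+d^*d)s^l\otimes e_{\alpha,l}$ (a scalar-valued Laplacian applied componentwise) plus correction terms involving $dh_\alpha^{ij}$, $\nabla h_{\alpha,ij}$, and second derivatives of $h_{\alpha,ij}$. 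Using \cite[Lemma 7.1]{JLY14} applied to the scalar function $h_{\alpha,ij}$ and the scalar form $s^j$, namely $d(\nabla h_{\alpha,ij}\lrcorner s^j)+\nabla h_{\alpha,ij}\lrcorner ds^j=\nabla_{\nabla h_{\alpha,ij}}s^j+F_{h_{\alpha,ij}}s^j$, together with the scalar Weitzenb\"ock identity $dd^*+d^*d=\nabla^*\nabla+\mathfrak{Ric}_p$, the correction terms reassemble into exactly the two summands in the definition of $\Theta^{(E,h)}$, while the remaining covariant-derivative contributions combine with $D^*D$ to yield the pointwise identity
\[
-\tfrac12\Delta_0|f|^2=\langle-(dd^*+d^*d)f,f\rangle+|Df|^2+\left\langle\left(\mathfrak{Ric}_p+\Theta^{(E,h)}\right)f,f\right\rangle.
\]

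Next, integrate this identity over $\Omega$. The divergence theorem applied to the left-hand side produces the boundary integral $\int_{\partial\Omega}\langle D_{\nabla\rho}f,f\rangle\frac{dS}{|\nabla\rho|}$. On the right, Stokes' theorem gives
\[
\int_\Omega|df|^2\,dV=\int_\Omega\langle d^*df,f\rangle\,dV+\int_{\partial\Omega}\langle f,\nabla\rho\lrcorner df\rangle\frac{dS}{|\nabla\rho|},
\]
\[
\int_\Omega|d^*f|^2\,dV=\int_\Omega\langle dd^*f,f\rangle\,dV,
\]
where the boundary term in the second equation drops because $f\in\operatorname{Dom}(d^*)$ forces $\nabla\rho\lrcorner f=\rho g$ for some $C^1$ form $g$. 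Combining these reduces the proof to the boundary identity
\[
\int_{\partial\Omega}\langle f,\nabla\rho\lrcorner df-D_{\nabla\rho}f\rangle\frac{dS}{|\nabla\rho|}=\int_{\partial\Omega}\langle F_\rho f,f\rangle\frac{dS}{|\nabla\rho|},
\]
which I would obtain from the identity $d(\nabla\rho\lrcorner f)+\nabla\rho\lrcorner df=D_{\nabla\rho}f+F_\rho f$ (again \cite[Lemma 7.1]{JLY14} applied componentwise in the flat frame, which is legitimate because $d$ and $D$ both treat $e_{\alpha,i}$ as parallel), together with the vanishing $\int_{\partial\Omega}\langle f,d(\nabla\rho\lrcorner f)\rangle\frac{dS}{|\nabla\rho|}=0$ coming from the $\operatorname{Dom}(d^*)$ condition.

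The main obstacle is the pointwise Weitzenb\"ock calculation producing $\Theta^{(E,h)}$: carefully tracking how the correction terms $dh_\alpha^{ij}\wedge(\nabla h_{\alpha,jk}\lrcorner s^k)$ and $h_\alpha^{ij}F_{h_{\alpha,jk}}s^k$ that emerge from expanding $dd^*+d^*d$ in the flat frame match exactly the two summands in the definition of $\Theta^{(E,h)}$, and that the leftover covariant-derivative terms combine cleanly to give $|Df|^2$ on the right-hand side, is delicate and relies crucially on the flatness of $E$ (so the frame $e_{\alpha,i}$ is genuinely constant for both $d$ and $D$).
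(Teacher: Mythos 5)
Your overall architecture coincides with the paper's: reduce to $f\in C^2$ by density, compute $d^*f$ in a flat frame, invoke \cite[Lemma 7.1]{JLY14} componentwise so that the two summands of $\Theta^{(E,h)}$ appear, use the Weitzenb\"ock formula to produce $\mathfrak{Ric}_p$, and reduce everything to the boundary identity
\[
\int_{\partial\Omega}\langle f,\nabla\rho\lrcorner df-D_{\nabla\rho}f\rangle \tfrac{dS}{|\nabla\rho|}=\int_{\partial\Omega}\langle F_\rho f,f\rangle \tfrac{dS}{|\nabla\rho|},
\]
which you prove exactly as the paper does. Up to that point the proposal tracks the paper's proof.

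The gap is your displayed pointwise identity
\[
-\tfrac12\Delta_0|f|^2=\langle-(dd^*+d^*d)f,f\rangle+|Df|^2+\bigl\langle\bigl(\mathfrak{Ric}_p+\Theta^{(E,h)}\bigr)f,f\bigr\rangle .
\]
This is the Bochner formula for a \emph{metric} connection, and the componentwise connection $D$ taken in a flat frame is not compatible with $h$ unless $h$ is parallel in that frame. Writing $f=\sum_i s^i\otimes e_{\alpha,i}$, one has
\[
\tfrac12\,X|f|^2=\langle D_Xf,f\rangle+\tfrac12\sum_{i,j}\langle s^i,s^j\rangle\,X(h_{\alpha,ij}),
\]
so $\tfrac12\nabla|f|^2$ differs from the vector field $Z:=\sum_\lambda\langle D_{X_\lambda}f,f\rangle X_\lambda$ by $W:=\tfrac12\sum_{i,j}\langle s^i,s^j\rangle\nabla h_{\alpha,ij}$, and your identity is off by $\operatorname{div}(W)$, which does not vanish in general. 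Symptomatically, you then assert that the divergence theorem applied to $-\tfrac12\Delta_0|f|^2$ yields the boundary term $\int_{\partial\Omega}\langle D_{\nabla\rho}f,f\rangle\frac{dS}{|\nabla\rho|}$: that is indeed the term the rest of the argument needs, but it is what $\operatorname{div}(Z)$ produces; $\tfrac12\Delta_0|f|^2$ would additionally contribute $\tfrac12\int_{\partial\Omega}\langle s^i,s^j\rangle(\nabla\rho)(h_{\alpha,ij})\frac{dS}{|\nabla\rho|}$, which does not cancel against anything. The paper avoids this by never forming $\Delta_0|f|^2$: it proves directly that $\langle(dd^*+d^*d)f,f\rangle=-\operatorname{div}(Z)+|Df|^2+\langle(\mathfrak{Ric}_p+\Theta^{(E,h)})f,f\rangle$ and applies the divergence theorem to $Z$ itself. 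Replacing your pointwise identity by this one repairs the proof; everything downstream then goes through as you wrote it.
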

\begin{proof}
Fix  $f\in C^1(\overline\Omega,\Lambda^pT^*M\otimes E)\cap\operatorname{Dom}(d^*),$ then we may assume 
 $f\in C^2(\overline\Omega,\Lambda^pT^*M\otimes E)$ by approximation. 
Set 
$$\square f:=dd^*f+d^*df.$$
We have 
$$d^*f|_{U_\alpha}=h_\alpha^{jk}d^*(h_{\alpha,ij}s^i)\otimes e_{\alpha,k},$$
then use the Stokes theorem and the fact $f\in \operatorname{Dom}(d^*)$, we have 
\begin{equation}\label{eqqu:1}
\int_{\Omega}|df|^2dV+\int_{\Omega}|d^*f|^2dV=\int_{\Omega}\langle \square f,f\rangle dV+\int_{\partial\Omega}\langle f, \nabla\rho\lrcorner df\rangle \frac{dS}{|\nabla\rho|}.
\end{equation}
Note that 
$$d^*f|_{U_\alpha}=d^*s^k\otimes e_{\alpha,k}-h_\alpha^{jk}\nabla h_{\alpha,ij}\lrcorner s^i\otimes e_{\alpha,k},$$
then use \cite[Lemma 7.1]{JLY14}, we get 
\begin{align*}
 &\quad \square f|_{U_\alpha}\\
 &=\Delta_ps^k\otimes e_{\alpha,k}-d(h_\alpha^{jk}\nabla h_{\alpha,ij}\lrcorner s^i)\otimes e_{\alpha,k}
 -h_\alpha^{jk}\nabla h_{ij}\lrcorner ds^i\otimes e_{\alpha,k}\\
 &=\Delta_ps^k\otimes e_{\alpha,k}
 -h_{\alpha}^{jk}\nabla_{\nabla h_{\alpha,ij}}s^i\otimes e_{\alpha,k}-\Theta^{(E,h)}f,
\end{align*}
so we know 
\begin{equation}\label{eqqu:2}
\langle \square f,f\rangle|_{U_\alpha}=\langle \Delta_ps^i,s^j\rangle h_{\alpha,ij}
-\langle \nabla_{\nabla h_{\alpha,ij}}s^i,s^j\rangle -\langle \Theta^{(E,h)}f,f\rangle.
\end{equation}
Choose a local orthonormal frame $X_1,\cdots,X_n$ near a point $x\in U_\alpha$.
Define a vector field  
$$Z|_{U_\alpha}:=\sum_{\lambda=1}^n\langle \nabla_{X_\lambda}s^i,s^j\rangle h_{\alpha,ij}X_\lambda,$$
then $Z$ is globally defined. 
Use $\operatorname{div}(Z)$ to denote the divergence of $Z$. 
By the Weitzenb\"ock formula, we know 
\begin{align}\label{eqqu:3}
&\quad \langle \Delta_ps^i,s^j\rangle h_{\alpha,ij}-\langle \nabla_{\nabla h_{\alpha,ij}}s^i,s^j\rangle\\
&=-\sum_{\lambda=1}^n\langle\nabla_{X_\lambda}\nabla_{X_\lambda}s^i,s^j\rangle
+\sum_{\mu=1}^n\langle \nabla_{\nabla_{X_\mu}X_\mu}s^i,s^j\rangle h_{\alpha,ij}-\langle \nabla_{\nabla h_{\alpha,ij}}s^i,s^j\rangle
\nonumber\\
&\quad +\langle\mathfrak{Ric}_ps^i,s^j\rangle h_{\alpha,ij}\nonumber\\
&=-\sum_{\lambda=1}^n\langle\nabla_{X_\lambda}\nabla_{X_\lambda}s^i,s^j\rangle
+\sum_{\lambda,\mu=1}^n\langle \nabla_{X_\mu}X_\mu,X_\lambda\rangle \langle \nabla_{X_\lambda}s^i,s^j\rangle h_{\alpha,ij}\nonumber\\
&\quad -\langle \nabla_{\nabla h_{\alpha,ij}}s^i,s^j\rangle+\langle\mathfrak{Ric}_ps^i,s^j\rangle h_{\alpha,ij}\nonumber\\
&=-\sum_{\lambda=1}^nX_\lambda(\langle \nabla_{X_\lambda}s^i,s^j\rangle h_{\alpha,ij})
-\sum_{\lambda,\mu=1}^n\langle \nabla_{X_\mu}X_\lambda,X_\mu\rangle \langle \nabla_{X_\lambda}s^i,s^j\rangle h_{\alpha,ij}
\nonumber\\
&\quad +\sum_{\lambda=1}^n\langle \nabla_{X_\lambda}s^i,\nabla_{X_\lambda}s^j\rangle h_{\alpha,ij}
+\langle \mathfrak{Ric}_pf,f\rangle\nonumber\\
&=-\operatorname{div}(Z)+|Df|^2+\langle \mathfrak{Ric}_pf,f\rangle.\nonumber
\end{align}
By the divergence theorem, we get 
\begin{equation}\label{eqqu:4}
\int_{\Omega}\operatorname{div}(Z)dV=\int_{\partial\Omega}\langle Z,\nabla\rho\rangle\frac{dS}{|\nabla\rho|}=
\int_{\partial\Omega}\langle D_{\nabla\rho}f,f\rangle \frac{dS}{|\nabla\rho|}.
\end{equation}
Combing Equalities (\ref{eqqu:1}), (\ref{eqqu:2}), (\ref{eqqu:3}), and (\ref{eqqu:4}), it suffices to prove that 
 \begin{equation}\label{eqqu:5}
 \int_{\partial\Omega}\langle f,\nabla\rho\lrcorner df-D_{\nabla\rho}f\rangle \frac{dS}{|\nabla\rho|}=\int_{\partial\Omega}\langle F_\rho f,f\rangle \frac{dS}{|\nabla\rho|}.
 \end{equation}
 \indent Since $f\in \operatorname{Dom}(d^*)$, then we know 
  $$\nabla\rho\lrcorner f=\rho h$$
  for some differential form $h$ of class $C^1$ on $\overline\Omega$, which implies that 
  \begin{equation}\label{eqqu:6}
  \int_{\partial\Omega}\langle f,d(\nabla\rho\lrcorner f)\rangle \frac{dS}{|\nabla\rho|}=0.
  \end{equation}
  Similar proof with the proof of \cite[Lemma 7.1]{JLY14} implies that 
  \begin{equation}\label{eqqu:7}
  d(\nabla\rho\lrcorner f)+\nabla\rho\lrcorner (df)=D_{\nabla\rho}f+F_\rho f.
  \end{equation}
  Combining Equalities (\ref{eqqu:6}) and (\ref{eqqu:7}), we know Equality (\ref{eqqu:5}) holds.
\end{proof}
   With the same proof as that of Lemma \ref{lem:bochner}, we also have the following: 
\begin{lem}\label{lem:bochner without boundary}
Let $M$ be a Riemannian manifold without boundary, and let $(E,h)$ be a flat vector bundle over $M$.
 Then for any $f\in C^1_c(M,\Lambda^pT^*M\otimes E),$ we have 
$$
\int_{M}|df|^2dV+\int_{M}|d^*f|^2dV=\int_{M}|D f|^2dV+\int_{M}\left\langle 
\left(\mathfrak{Ric}_p+\Theta^{(E,h)}\right)f,f\right\rangle dV.
$$
\end{lem}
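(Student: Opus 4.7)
The plan is to derive this identity directly from Lemma \ref{lem:bochner} by exhausting the boundary terms. Since $f\in C^1_c(M,\Lambda^pT^*M\otimes E)$, I would first choose a precompact open set $\Omega\subset\subset M$ with $C^2$-boundary defining function $\rho$ such that $\supp(f)$ lies in the interior of $\Omega$; any precompact smooth neighborhood of $\supp(f)$ will do. Compactly supported forms automatically lie in $\operatorname{Dom}(d^*)$, so all hypotheses of Lemma \ref{lem:bochner} are satisfied for $f$ viewed as a section over $\overline{\Omega}$.

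Applying Lemma \ref{lem:bochner}, the boundary integral $\int_{\partial\Omega}\langle F_\rho f,f\rangle\,\frac{dS}{|\nabla\rho|}$ vanishes because $f$ is identically zero in a neighborhood of $\partial\Omega$, and each of the remaining integrals over $\Omega$ coincides with its counterpart over $M$ since the integrands are supported inside $\Omega$. This yields exactly the claimed identity.

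Alternatively, one can re-run the argument of Lemma \ref{lem:bochner} directly on $M$. The two places where boundary contributions arose there, namely the Stokes step leading to $\int_\Omega|df|^2dV+\int_\Omega|d^*f|^2dV=\int_\Omega\langle\square f,f\rangle dV+\text{(boundary)}$ and the divergence-theorem step applied to the auxiliary vector field $Z$, both produce zero when $f$ has compact support. The pointwise Weitzenb\"ock-type identity expressing $\langle\square f,f\rangle$ in terms of $-\operatorname{div}(Z)$, $|Df|^2$, $\langle\mathfrak{Ric}_pf,f\rangle$ and $\langle\Theta^{(E,h)}f,f\rangle$ is purely local and requires no modification. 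I expect no substantive obstacle: the sole thing to verify is that compact support suffices to kill every boundary term, which it does.
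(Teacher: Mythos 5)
Your proposal is correct and matches the paper, which proves this lemma simply by observing that the argument of Lemma \ref{lem:bochner} goes through verbatim with all boundary terms vanishing for compactly supported $f$. Either of your two routes (invoking Lemma \ref{lem:bochner} on a precompact smooth neighborhood of $\supp(f)$, or re-running its proof on $M$) is a valid realization of exactly this idea.
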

  Through a minor modification of the proof of \cite[Proposition 2.1.1]{H65},
  one has the following density lemma.
    \begin{lem}\label{lem:approximation}
     $C^1(\overline{\Omega},\Lambda^pT^*M\otimes E)\cap \dom(d^*)$ is dense in $\dom(d)\cap \dom(d^*)$ under the graph norm
     given by 
     $$f\mapsto \left(\int_{\Omega}|f|^2dV\right)^{1/2}+\left(\int_{\Omega}|df|^2dV\right)^{1/2}+\left(\int_{\Omega}|d^*f|^2dV\right)^{1/2}.$$
    \end{lem}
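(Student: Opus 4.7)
My plan is to follow H\"ormander's argument from \cite[Proposition 2.1.1]{H65}, adapting it from the $\bar\partial$ setting on a domain in $\mathbb{C}^n$ to the $d$ operator on a flat bundle over a Riemannian manifold with a $C^2$ boundary. The strategy has four steps: localize, reflect across the boundary, mollify, and estimate commutators.

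First I would localize. Take a finite open cover $\{U_j\}$ of $\overline\Omega$ with a subordinate partition of unity $\{\chi_j\}\subset C_c^\infty$, where each $U_j$ is either contained in $\Omega$ (interior chart) or is a boundary coordinate chart trivializing $E$. Multiplication by $\chi_j$ maps $\dom(d)\cap\dom(d^*)$ into itself with the graph norm controlled, since $[d,\chi_j]$ and $[d^*,\chi_j]$ are zero-order. In an interior chart the statement is the classical fact that $C_c^\infty$ is dense in $\dom(d)\cap\dom(d^*)$ via standard Friedrichs mollification. So the real content is the boundary charts.

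In a boundary chart choose coordinates $x=(x',x_n)$ so that $\Omega=\{x_n<0\}$ and $\rho=x_n+O(|x|^2)$; straighten the boundary by a $C^2$ diffeomorphism so that $\rho=x_n$ exactly. Trivialize $E$ via flat local sections $e_{\alpha,1},\dots,e_{\alpha,r}$, which reduces everything to scalar-valued forms $s^i$ with coefficients from the (possibly variable) Riemannian metric $g$ and Hermitian metric $h_{\alpha,ij}$. Decompose $f=f_t+dx_n\wedge g$ with $\partial_{x_n}\lrcorner f_t=\partial_{x_n}\lrcorner g=0$. The condition $f\in\dom(d^*)$ is, by integration against $C^1(\overline\Omega)$ test forms, equivalent to $g|_{x_n=0}=0$ in the sense of boundary traces (this is exactly the calculation already used in the proof of Lemma \ref{lem:bochner} via $\nabla\rho\lrcorner f=\rho\, h$). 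Extend $f_t$ to $\{x_n>0\}$ by even reflection in $x_n$ and $g$ by odd reflection; the resulting form $\tilde f$ lies in $L^2$ on an open neighborhood $W$ of the chart in $\mathbb{R}^n$, and, thanks to the odd extension of $g$, computation shows that $d\tilde f$ and $d^*\tilde f$ (with respect to the reflected metric and reflected Hermitian metric) are the corresponding reflections of $df$ and $d^*f$, with no singular boundary contributions.

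Now take a standard mollifier $\phi_\varepsilon$ on $\mathbb{R}^n$ and set $f_\varepsilon:=\phi_\varepsilon\ast\tilde f$. For $\varepsilon$ small enough, $f_\varepsilon\in C^\infty(\overline\Omega,\Lambda^pT^*M\otimes E)$. Because $g$ is extended oddly and $\phi_\varepsilon$ is radially symmetric, the normal component of $f_\varepsilon$ vanishes identically on $\{x_n=0\}$, so $f_\varepsilon\in\dom(d^*)$. Convergence $f_\varepsilon\to f$ in $L^2$ is immediate; for $df_\varepsilon\to df$ and $d^*f_\varepsilon\to d^*f$ in $L^2$ I invoke Friedrichs' commutator lemma applied to the first-order operators $d$ and $d^*$, whose coefficients (through $g_{ij}$, $g^{ij}$, and $h_{\alpha,ij}$, $h_\alpha^{ij}$) are at least Lipschitz; the lemma gives $[d,\phi_\varepsilon\ast]\tilde f\to 0$ and $[d^*,\phi_\varepsilon\ast]\tilde f\to 0$ in $L^2_{\loc}$.

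The main obstacle is Step~3, the reflection: one must check that odd/even extension really does match the distributional $d$ and $d^*$ across $\{x_n=0\}$ when the metric $g$ and the bundle metric $h$ are only $C^2$ (not flat) and when the boundary straightening has introduced variable coefficients. The check amounts to verifying that the boundary traces of the normal components of $f$, $df$, and $d^*f$, coming from opposite sides, cancel in the distributional sense; the boundary condition $g|_{x_n=0}=0$ from $f\in\dom(d^*)$ is precisely what makes this cancellation occur. Everything else (summing the local approximations via $\chi_j$ and letting $\varepsilon\to 0$) is routine.
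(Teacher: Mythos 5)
Your proposal is correct and follows the same route as the paper, whose entire proof is the citation ``minor modification of H\"ormander's Proposition 2.1.1'': localization by a partition of unity, identification of $\dom(d^*)$ with the vanishing of the normal trace, a regularization compatible with that boundary condition, and Friedrichs' commutator lemma for the variable-coefficient first-order operators are exactly the intended ingredients. The one point to treat with more care in a full write-up is that the tangential/normal splitting and the reflection must be taken with respect to the metric normal $\nabla\rho$ rather than the coordinate vector $\partial_{x_n}$ (e.g.\ by working in boundary normal coordinates, or by decomposing $f$ against $d\rho/|d\rho|$), since in a generic straightening the condition $f\in\dom(d^*)$ is not literally $g|_{x_n=0}=0$ for the coordinate normal component and the map $x_n\mapsto -x_n$ is not an isometry of the extended metric.
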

  
  \indent Now we can give the proof of Theorem \ref{thm:flat vector bundle}. For convenience, we restate it here.
 \begin{thm}[= Theorem \ref{thm:flat vector bundle}]
Let $M$ be an  $n$-dimensional $p$-convex Riemannian manifold without boundary for some  $p\in\{1,\cdots,n\}$, and let 
$(E,h)$ be a flat vector bundle over $M$. Suppose $\mathfrak{Ric}_p+\Theta^{(E,h)}\geq 0,$ then for any 
$d$-closed $f\in L^2_{\loc}(M,\Lambda^pT^*M\otimes E)$ satisfying 
$$\int_{M}\left\langle \left(\mathfrak{Ric}_p+\Theta^{(E,h)}\right)^{-1}f,f\right\rangle dV<\infty,$$
there exists $u\in L^2(M,\Lambda^pT^*M\otimes E)$ such that 
$$du=f\text{ and }\int_{M}|u|^2dV\leq \int_{M}\left\langle \left(\mathfrak{Ric}_p+\Theta^{(E,h)}\right)^{-1}f,f\right\rangle dV.$$
Moreover, if $f$ is smooth, then $u$ can be taken to be smooth.
\end{thm}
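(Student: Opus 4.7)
The proof follows H\"ormander's classical $L^2$-existence scheme built on the Bochner identity of Lemma~\ref{lem:bochner}. The first step is to exhaust $M$ by relatively compact sublevel sets. Since $M$ is $p$-convex, fix a $C^2$ $p$-plurisubharmonic exhaustion $\rho$, and by Sard's theorem select regular values $c_k \uparrow +\infty$ so that $\Omega_k := \{\rho < c_k\}$ is a smooth relatively compact exhaustion of $M$ with boundary defining function $\rho - c_k$. Diagonalizing $\nabla^2\rho$ at a point in an orthonormal frame $X_1, \ldots, X_n$ with eigenvalues $\lambda_1,\ldots,\lambda_n$ gives $\langle F_\rho X_I^*, X_I^*\rangle = \sum_{i \in I}\lambda_i$ for every $p$-multi-index $I$, so $p$-plurisubharmonicity of $\rho$ is equivalent to $F_\rho \geq 0$ on $\Lambda^p T^*M$; in particular the boundary integral $\int_{\partial\Omega_k}\langle F_\rho \phi, \phi\rangle\, dS/|\nabla\rho|$ appearing in Lemma~\ref{lem:bochner} is non-negative for every admissible $\phi$.

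The strategy on each $\Omega_k$ is duality. For $\phi \in C^1(\overline{\Omega_k}, \Lambda^p T^*M \otimes E) \cap \dom(d^*)$ with $d\phi = 0$, discarding the non-negative terms $\int |D\phi|^2$ and the boundary integral in Lemma~\ref{lem:bochner} yields
\[
\int_{\Omega_k}\left\langle(\mathfrak{Ric}_p+\Theta^{(E,h)})\phi,\phi\right\rangle\, dV \leq \|d^*\phi\|_{L^2(\Omega_k)}^2,
\]
which extends to every $d$-closed $\phi \in \dom(d)\cap\dom(d^*)$ by Lemma~\ref{lem:approximation}. For a general $\phi \in \dom(d^*)$, orthogonally decompose $\phi = \phi_1 + \phi_2$ in $L^2(\Omega_k,\Lambda^p T^*M\otimes E)$ with $\phi_1 \in \ker(d\colon L^2(\Lambda^p) \to L^2(\Lambda^{p+1}))$ and $\phi_2 \in \overline{R(d^*)}$. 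The identity $d^2=0$, together with the closedness of $d$ and $d^*$, forces $\phi_1 \in \dom(d^*)$ with $d^*\phi_1 = d^*\phi$; since $f$ is $d$-closed, $\langle f, \phi_2\rangle = 0$, and the pointwise Cauchy--Schwarz inequality implicit in the definition of $\langle(\mathfrak{Ric}_p+\Theta^{(E,h)})^{-1}f,f\rangle$ gives
\[
|\langle f,\phi\rangle|^2 = |\langle f,\phi_1\rangle|^2 \leq N_f \cdot \|d^*\phi\|_{L^2(\Omega_k)}^2,
\]
where $N_f$ denotes the global integral in the theorem. The functional $d^*\phi \mapsto \langle f, \phi\rangle$ on $R(d^*)$ is therefore well-defined and bounded; extending by Hahn--Banach and applying the Riesz representation theorem produces $u_k \in L^2(\Omega_k, \Lambda^{p-1}T^*M \otimes E)$ with $du_k = f$ and $\|u_k\|_{L^2(\Omega_k)}^2 \leq N_f$.

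Extending each $u_k$ by zero to $M$ and exploiting weak compactness of the uniformly bounded family yields a subsequential weak limit $u \in L^2_{\loc}(M, \Lambda^{p-1} T^*M \otimes E)$ satisfying $du = f$ distributionally and $\int_M |u|^2\, dV \leq N_f$ by lower semi-continuity. For the smoothness claim, I will select $u_k$ to be the minimal-norm solution on $\Omega_k$, so $u_k \in \overline{R(d^*)}$; when $f$ is smooth, standard elliptic regularity applied to the associated Hodge--Laplacian equation satisfied by $u_k$ yields smoothness, which is inherited by the global limit after identifying it as the minimal-norm solution on $M$. I expect the two most delicate technical points to be the orthogonal decomposition identity $d^*\phi_1 = d^*\phi$ (requiring a careful use of $d^2=0$ and the closedness of the relevant operators in $L^2$) and the regularity bootstrap for the global solution; beyond these, the argument is a routine adaptation of the H\"ormander duality method to the flat-bundle Bochner formula.
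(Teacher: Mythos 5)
Your proposal is correct and follows essentially the same route as the paper: exhaustion by $p$-convex sublevel sets via Sard's theorem, the a priori inequality obtained from Lemma~\ref{lem:bochner} together with the density Lemma~\ref{lem:approximation} and the non-negativity of $F_\rho$ and of the boundary term, the Hahn--Banach/Riesz duality argument (which the paper simply defers to Demailly's Theorem 4.5, Chapter VIII, and which you correctly spell out via the decomposition $\phi=\phi_1+\phi_2$), a weak limit over the exhaustion, and elliptic regularity of $dd^*+d^*d$ applied to the minimal-norm solution. The only cosmetic remark is that when invoking Lemma~\ref{lem:approximation} one should keep the $\|d\phi\|^2$ term in the estimate and let it vanish in the limit for $d$-closed $\phi$, since the approximating forms need not themselves be $d$-closed; this does not affect the argument.
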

\begin{proof}
We firstly prove the first part. Thanks to Sard's theorem, we know $M$ can be exhausted by relatively compact open subset with $C^2$-boundary
 which is $p$-convex. By taking weak limit and approximation, we only need to work on a open subset $\Omega\subset\subset M$ 
 with $C^2$-boundary which is $p$-convex. By Lemma  \ref{lem:bochner} and \ref{lem:approximation}, 
 for all $g\in \operatorname{Dom}(d)\cap \operatorname{Dom}(d^*),$ we know 
 $$\int_{\Omega}|dg|^2dV+\int_{\Omega}|d^*g|^2dV\geq \int_{\Omega}\left\langle 
\left(\mathfrak{Ric}_p+\Theta^{(E,h)}\right)g,g\right\rangle dV.$$
Similar to the proof of \cite[Theorem 4.5, Chapter VIII]{D12}, the conclusion follows from 
the above inequality by the Hahn-Banach extension theorem and Riesz representation theorem.\\
\indent By taking the minimal solution, the regularity of $u$ follows from the fact $dd^*+d^*d$ is an elliptic operator.
\end{proof}
\section{The proof of Theorem \ref{thm:improved $L^2$-estimates}}

 \par To prove Theorem \ref{thm:improved $L^2$-estimates}, we need the following important lemma.
 \begin{lem}\label{lem:differential form}
 Let $M$ be a Riemannian manifold without boundary, $\Omega\subset\subset M$ be an open subset with 
 smooth boundary such that $\mathfrak{Ric}_p\geq 0 $ on $\Omega$. Then there is a constant 
 $\delta:=\delta(\Omega)>0$ such that 
 $$\delta\int_{\Omega}|f|^2dV\leq \int_{\Omega}|\nabla f|^2dV+\int_{\partial\Omega}|f|^2dS,\ \forall f\in C^1(\overline\Omega,\Lambda^pT^*M).$$
 Moreover, if $M=\mr^n$ ($n\geq 3$), then $\delta$ can be chosen to be a constant only depends on $n,|\Omega|$.
 \end{lem}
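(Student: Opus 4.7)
The plan is to prove this Poincar\'e-type inequality by contradiction, using the Rellich-Kondrachov compactness theorem. Suppose the inequality fails for every $\delta>0$; then one extracts a sequence $\{f_k\}\subset C^1(\overline{\Omega},\Lambda^pT^*M)$ normalized by $\|f_k\|_{L^2(\Omega)}=1$ yet satisfying $\|\nabla f_k\|_{L^2(\Omega)}^2+\|f_k\|_{L^2(\partial\Omega)}^2\to 0$. The sequence is then bounded in $W^{1,2}(\Omega,\Lambda^pT^*M)$, so after passing to a subsequence we may assume $f_k\rightharpoonup f$ weakly in $W^{1,2}$ and $f_k\to f$ strongly in $L^2$. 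The limit satisfies $\|f\|_{L^2(\Omega)}=1$, $\nabla f=0$ almost everywhere (since $\nabla f_k\to 0$ in $L^2$ while $\nabla f_k\rightharpoonup\nabla f$ weakly in $L^2$), and $f|_{\partial\Omega}=0$ in the trace sense by continuity of the trace from $W^{1,2}(\Omega)$ to $L^2(\partial\Omega)$.

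The next step is to derive a contradiction. Since $\nabla^*\nabla$ is an elliptic operator with smooth coefficients and $\nabla f=0$ weakly forces $\nabla^*\nabla f=0$ weakly, elliptic regularity upgrades $f$ to a smooth section of $\Lambda^pT^*M$; hence $\nabla f=0$ classically. A smooth parallel section extends across the smooth boundary $\partial\Omega$ via local parallel transport and is determined on each connected component of $\overline{\Omega}$ by its value at a single point. Since $\Omega\subset\subset M$ has smooth boundary, every connected component of $\overline{\Omega}$ meets $\partial\Omega$; combined with $f|_{\partial\Omega}=0$ (which, for smooth $f$, is pointwise vanishing on $\partial\Omega$), this yields $f\equiv 0$ on $\Omega$, contradicting $\|f\|_{L^2}=1$.

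For the Euclidean refinement, I would invoke the Gagliardo-Nirenberg trace-Sobolev inequality: for a bounded domain $\Omega\subset\mr^n$ with smooth boundary and any nonnegative $u\in W^{1,1}(\Omega)$, one has $\|u\|_{L^{n/(n-1)}(\Omega)}\leq C_n\bigl(\|\nabla u\|_{L^1(\Omega)}+\|u\|_{L^1(\partial\Omega)}\bigr)$ with a dimensional constant $C_n$ (obtained by extending $u$ by zero across $\partial\Omega$ and applying the BV Sobolev embedding on $\mr^n$, which requires $n\geq 3$). Apply this with $u=|f|^2$, use the Kato inequality $|\nabla |f|^2|\leq 2|f||\nabla f|$ followed by Cauchy-Schwarz, relate $\||f|^2\|_{L^1(\Omega)}$ to $\||f|^2\|_{L^{n/(n-1)}(\Omega)}$ by H\"older with factor $|\Omega|^{1/n}$, and finally absorb the cross term $\|f\|_{L^2}\|\nabla f\|_{L^2}$ into $\|f\|_{L^2}^2$ by Young's inequality. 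The resulting inequality exhibits an explicit $\delta=\delta(n,|\Omega|)>0$.

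The principal difficulty lies in the manifold case. While the contradiction-compactness scheme is classical, several points deserve care: Rellich compactness and the trace theorem must be invoked for $W^{1,2}$ sections of a bundle over a relatively compact open subset of $M$ (handled by a finite cover of $\overline{\Omega}$ by coordinate charts trivializing $\Lambda^pT^*M$ together with a partition of unity); the weak equation $\nabla f=0$ must be bootstrapped to smoothness via the ellipticity of $\nabla^*\nabla$; and one must ensure the smooth parallel section extends up to $\partial\Omega$ so that the $L^2$-vanishing of the trace translates into pointwise vanishing. Note that the hypothesis $\mathfrak{Ric}_p\geq 0$ does not enter the proof of the inequality itself; it is inherited from the setting of Theorem \ref{thm:improved $L^2$-estimates} in which this lemma will be applied.
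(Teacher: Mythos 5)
Your argument is correct, but for the manifold case it takes a genuinely different route from the paper. The paper simply cites Corollary~1.5 of \cite{DHQ25} (the authors' companion paper on Sobolev-type inequalities for differential forms) for the first inequality; you instead give a self-contained contradiction--compactness proof (normalize, extract a weak $W^{1,2}$-limit via Rellich, show the limit is a parallel form with vanishing trace, conclude it is zero). This buys independence from \cite{DHQ25} at the price of a purely non-constructive $\delta(\Omega)$, which is all the lemma claims anyway; you are also right that $\mathfrak{Ric}_p\geq 0$ plays no role at this level of generality. One caveat your argument correctly isolates but which is worth flagging: the conclusion genuinely needs every connected component of $\Omega$ to have nonempty boundary (otherwise a nonzero parallel form on a closed component is a counterexample), an implicit hypothesis in the lemma as stated. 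For the Euclidean refinement your route is essentially the paper's: the paper invokes Inequality~(2.1) of \cite{CM16} plus H\"older, which is exactly the trace--Sobolev inequality $\|u\|_{L^{n/(n-1)}(\Omega)}\leq C_n(\|\nabla u\|_{L^1(\Omega)}+\|u\|_{L^1(\partial\Omega)})$ you rederive from the BV embedding; your subsequent application to $u=|f|^2$ with Kato, H\"older and Young is the intended computation. (Minor point: the BV embedding into $L^{n/(n-1)}$ does not itself require $n\geq 3$; that restriction comes from the statement of the lemma, not from this step.)
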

 \begin{proof}
  The first inequality follows from Corollary 1.5 of \cite{DHQ25}. 
  The case $M=\mr^n$ follows from the following Sobolev-type inequality: there is a constant $\delta:=\delta(n,|\Omega|)>0$
  such that 
  $$\delta\int_{\Omega}|f|^2dV\leq \int_{\Omega}|\nabla f|^2dV+\int_{\partial\Omega}|f|^2dS,\ \forall f\in C^1(\overline\Omega),$$
  which can be proved by using Inequality (2.1) of \cite{CM16} and the H\"older inequality.
 \end{proof}
  Use Lemma \ref{lem:differential form}, one has the following: 
  \begin{cor}\label{cor:improved L^2 estimate}
  With the same assumptions and notations as in Theorem \ref{thm:improved $L^2$-estimates}.
  Choose  a constant $\lambda>0$ such that 
   $$\frac{1}{\lambda} I\leq h|_{\Omega}\leq \lambda I,$$
   where $I$ is the identity matrix.
   Then there is constant 
 $\delta:=\delta(\Omega)>0$ such that 
 $$\frac{\delta}{\lambda^2}\int_{\Omega}|f|^2dV\leq \int_{\Omega}|D f|^2dV+\int_{\partial\Omega}|f|^2dS,\ \forall f\in C^1(\overline\Omega,\Lambda^pT^*M\otimes E).$$
 Moreover, if $M=\mr^n$ ($n\geq 3$), then $\delta$ can be chosen to be a constant only depends on $n,|\Omega|$.
  \end{cor}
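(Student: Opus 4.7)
The plan is to reduce the statement to the scalar case established in Lemma \ref{lem:differential form} by exploiting the triviality of $E$. First I would fix a global frame $e_1,\ldots,e_r$ of $E$ (which exists because $E$ is trivial), in terms of which the hypothesis $\frac{1}{\lambda}I\leq h|_\Omega\leq\lambda I$ is formulated. Any $f\in C^1(\overline\Omega,\Lambda^pT^*M\otimes E)$ decomposes uniquely as $f=\sum_{i=1}^r s^i\otimes e_i$ with $s^i\in C^1(\overline\Omega,\Lambda^pT^*M)$, and by the definition of $D$ given in Section \ref{sec:flat} one has $Df=\sum_i\nabla s^i\otimes e_i$. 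The induced tensor-product metric on $\Lambda^pT^*M\otimes E$ then yields the pointwise identities
$$|f|^2=\sum_{i,j}h_{ij}\langle s^i,s^j\rangle,\qquad |Df|^2=\sum_{i,j}h_{ij}\langle\nabla s^i,\nabla s^j\rangle.$$

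The second step is to propagate the matrix bound on $h$ to these pointwise quantities. Expanding $s^i$ (resp.\ $\nabla s^i$) in a local orthonormal frame of $\Lambda^pT^*M$ (resp.\ of $T^*M\otimes\Lambda^pT^*M$) reduces each of the above quadratic forms to a sum over basis indices of scalar quadratic forms in $(s^1,\ldots,s^r)$ with matrix $(h_{ij})$. Applying $\frac{1}{\lambda}I\leq h\leq\lambda I$ coefficient by coefficient gives
$$\frac{1}{\lambda}\sum_i|s^i|^2\leq |f|^2\leq\lambda\sum_i|s^i|^2,\qquad \frac{1}{\lambda}\sum_i|\nabla s^i|^2\leq |Df|^2\leq\lambda\sum_i|\nabla s^i|^2.$$

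The third step is to apply Lemma \ref{lem:differential form} to each scalar $p$-form $s^i$ and sum over $i$, producing
$$\delta\int_\Omega\sum_i|s^i|^2\,dV\leq\int_\Omega\sum_i|\nabla s^i|^2\,dV+\int_{\partial\Omega}\sum_i|s^i|^2\,dS.$$
Substituting the lower bound on the left-hand side and the upper bounds on the right-hand side, and then dividing through by $\lambda$, the asserted estimate with constant $\delta/\lambda^2$ drops out. The addendum for $M=\mr^n$ with $n\geq 3$ is automatic, since in that case Lemma \ref{lem:differential form} already furnishes a constant $\delta$ depending only on $n$ and $|\Omega|$.

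I do not foresee a substantive obstacle: the reduction is genuinely elementary and relies only on the global trivialization of $E$ (to invoke the scalar lemma componentwise) together with the spectral bounds on $h$ (to translate between the bundle norm and the componentwise norm with loss no worse than a factor of $\lambda^2$). The only mild care required is to observe that, because the bundle metric is the tensor product of the base metric with $h$, the $\Lambda^pT^*M$ and $T^*M\otimes\Lambda^pT^*M$ factors contribute identically to $|f|^2$ and $|Df|^2$ and hence the factor $\lambda^2$ is the same for both.
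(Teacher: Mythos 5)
Your proposal is correct and follows essentially the same route as the paper: decompose $f=\sum_i s^i\otimes e_i$ in a global frame of the trivial bundle $E$, apply Lemma \ref{lem:differential form} to each component $s^i$, and use the spectral bounds $\frac{1}{\lambda}I\leq h\leq\lambda I$ to pass between $|f|^2$, $|Df|^2$ and the componentwise sums, losing the factor $\lambda^2$. The paper's proof is just a slightly more condensed version of the same chain of inequalities, so no further comment is needed.
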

  \begin{proof}
    Choose a frame $e_1,\cdots,e_r$ of $E$, where $r$ is the rank of $E$.
   Fix $f\in C^1(\overline\Omega,\Lambda^pT^*M\otimes E)$, and write 
   $$f=\sum_{i=1}^r s^i\otimes e_{i}.$$
   By Lemma \ref{lem:differential form}, there is a constant 
    $\delta:=\delta(\Omega)>0$ such that for any $i$, we have 
  $$\delta\int_{\Omega}| s^i|^2dV\leq \int_{\Omega}|\nabla s^i|^2dV+\int_{\partial\Omega}|s^i|^2dS.$$
  Moreover, if $M=\mr^n$ ($n\geq 3$), then $\delta$ can be chosen to be a constant only depends on $n,|\Omega|$.
  Then we get
  \begin{align*}
   \int_{\Omega}|f|^2dV&\leq\lambda\sum_{i=1}^r\int_{\Omega}|s_i|^2dV\leq \frac{\lambda}{\delta}\sum_i \left(\int_{\Omega}|\nabla s^i|^2dV+\int_{\partial\Omega}|s^i|^2dS\right)\\
   &\leq \frac{\lambda^2}{\delta}\left(\int_{\Omega}|D f|^2dV+\int_{\partial\Omega}|f|^2dS\right).
  \end{align*}
  \end{proof}
  Now we give the proof of Theorem \ref{thm:improved $L^2$-estimates}
  \begin{thm}[= Theorem \ref{thm:improved $L^2$-estimates}]
    Let $M$ be a Riemannian manifold without boundary, $\Omega\subset\subset M$ be an open subset with 
    a smooth strictly $p$-convex boundary defining function $\rho$
    such that $\mathfrak{Ric}_p\geq 0$ on $\Omega$, and let $(E,h)$ be a trivial vector bundle over $M$
    such that $\Theta^{(E,h)}\geq 0$ on $\Omega$. Then 
    there is a constant $\delta:=\delta(\Omega,\rho,h)>0$ such that for any nonzero 
    $d$-closed $f\in L^2(\Omega,\Lambda^pT^*M\otimes E)$ satisfying 
    $$N_f:=\int_{\Omega}\left\langle \left(\mathfrak{Ric}_p+\Theta^{(E,h)}\right)^{-1}f,f\right\rangle dV<\infty,$$
there exists $u\in L^2(\Omega,\Lambda^pT^*M\otimes E)$ such that $du=f$ and 
$$\int_{\Omega}|u|^2dV\leq \frac{\|f\|_{L^2(\Omega)}}{\sqrt{\|f\|^2_{L^2(\Omega)}+\delta N_f}}\int_{\Omega}\left\langle \left(\mathfrak{Ric}_p+\Theta^{(E,h)}\right)^{-1}f,f\right\rangle dV,$$
    where $\|f\|_{L^2(\Omega)}^2:=\int_{\Omega}|f|^2dV$. Moreover, if $f$ is smooth, then $u$ can be taken to be smooth.
\end{thm}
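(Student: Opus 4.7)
The plan is to sharpen the Hörmander–Hahn–Banach step used in Theorem~\ref{thm:flat vector bundle} by extracting an extra coercive $\|g\|_{L^2}^2$ term from the Bochner identity of Lemma~\ref{lem:bochner}. For $g\in C^1(\overline\Omega,\Lambda^pT^*M\otimes E)\cap\dom(d^*)$, the condition $g\in\dom(d^*)$ forces $\nabla\rho\lrcorner g=0$ on $\partial\Omega$; combined with strict $p$-convexity of $\rho$ this gives a constant $c_0=c_0(\rho,\Omega)>0$ with $\langle F_\rho g,g\rangle\geq c_0|g|^2$ pointwise on $\partial\Omega$. Feeding this into Lemma~\ref{lem:bochner} and absorbing $\int_\Omega|Dg|^2\,dV+c_0\int_{\partial\Omega}|g|^2\,dS/|\nabla\rho|$ via Corollary~\ref{cor:improved L^2 estimate} produces some $\delta=\delta(\Omega,\rho,h)>0$ with
\begin{equation*}
\int_\Omega |dg|^2+|d^*g|^2\,dV\;\geq\;\int_\Omega \langle(\mathfrak{Ric}_p+\Theta^{(E,h)})g,g\rangle\,dV+\delta\|g\|_{L^2(\Omega)}^2,
\end{equation*}
and, by Lemma~\ref{lem:approximation}, the same holds on all of $\dom(d)\cap\dom(d^*)$.

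Next I would repeat the Hahn–Banach step with this sharper inequality. For $g\in\ker d\cap\dom(d^*)$, the pointwise Cauchy–Schwarz estimate of Demailly gives
\begin{equation*}
|\langle f,g\rangle|^2\;\leq\;N_f\int_\Omega\langle(\mathfrak{Ric}_p+\Theta^{(E,h)})g,g\rangle\,dV\;\leq\;N_f\bigl(\|d^*g\|_{L^2}^2-\delta\|g\|_{L^2}^2\bigr),
\end{equation*}
while bare Cauchy–Schwarz yields $\|g\|_{L^2}^2\geq|\langle f,g\rangle|^2/\|f\|_{L^2}^2$. Substituting and rearranging gives $|\langle f,g\rangle|^2\leq \frac{N_f\|f\|_{L^2}^2}{\|f\|_{L^2}^2+\delta N_f}\,\|d^*g\|_{L^2}^2$. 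For general $g\in\dom(d^*)$, decompose orthogonally $g=g_1+g_2$ with $g_1\in\overline{\ker d}$; since $f\in\ker d$ one has $\langle f,g\rangle=\langle f,g_1\rangle$, and since $dd^*g_1\in\ker d$ is orthogonal to $g_2$ one has $\langle d^*g_1,d^*g_2\rangle=0$, so $\|d^*g\|_{L^2}^2\geq\|d^*g_1\|_{L^2}^2$ and the bound persists. Hahn–Banach and Riesz representation then furnish $u\in L^2(\Omega,\Lambda^pT^*M\otimes E)$ with $du=f$ and
\begin{equation*}
\|u\|_{L^2(\Omega)}^2\;\leq\;\frac{\|f\|_{L^2(\Omega)}^2\,N_f}{\|f\|_{L^2(\Omega)}^2+\delta N_f}\;\leq\;\frac{\|f\|_{L^2(\Omega)}\,N_f}{\sqrt{\|f\|_{L^2(\Omega)}^2+\delta N_f}},
\end{equation*}
where the last step uses $\|f\|_{L^2(\Omega)}\leq\sqrt{\|f\|_{L^2(\Omega)}^2+\delta N_f}$; this is the stated bound. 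Regularity of the minimal solution follows from ellipticity of $dd^*+d^*d$ exactly as in Theorem~\ref{thm:flat vector bundle}.

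The main obstacle is the sharpened coercive inequality of the first paragraph: producing a quantitative $\delta>0$ that depends only on $(\Omega,\rho,h)$ requires combining the uniform positivity of $F_\rho$ on forms annihilated by $\nabla\rho$ (from strict $p$-convexity) with the Sobolev/Poincaré-type trace estimate of Corollary~\ref{cor:improved L^2 estimate}, itself a consequence of \cite{DHQ25}. The latter is where the triviality of $E$ is essential, via the uniform comparability $\tfrac{1}{\lambda}I\leq h|_\Omega\leq\lambda I$ used in Corollary~\ref{cor:improved L^2 estimate}; when $M=\mathbb{R}^n$ with constant $h$, tracking the constants through Corollary~\ref{cor:improved L^2 estimate} gives a $\delta$ depending only on $n$, $|\Omega|$, $h$, and the smallest eigenvalue of $\nabla^2\rho|_{\partial\Omega}$, as announced after the theorem.
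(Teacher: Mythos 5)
Your proposal follows essentially the same route as the paper's proof: strict $p$-convexity of $\rho$ gives a uniform lower bound $\langle F_\rho g,g\rangle\geq c_0|g|^2$ on $\partial\Omega$ for $g\in\dom(d^*)$, which together with Lemma \ref{lem:bochner}, Lemma \ref{lem:approximation} and Corollary \ref{cor:improved L^2 estimate} yields the coercive inequality with the extra $\delta\|g\|_{L^2}^2$ term, and the Cauchy--Schwarz/Hahn--Banach step then produces the improved constant exactly as in the paper (which keeps $\|dg\|^2$ on the right and defers the $\ker d$ decomposition to Demailly's argument, while you make it explicit). The argument is correct.
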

\begin{proof}
  We only need to prove the first part.
  Choose a constant $\lambda_1>0$ such that 
  $$\frac{1}{\lambda_1} I\leq h|_{\Omega}\leq \lambda_1 I.$$
  Choose a constant $\lambda_2>0$ such that for any $x\in\partial\Omega$, and any 
  $u\in \Lambda^p T_x^*M\otimes E_x$ satisfying $\nabla\rho\lrcorner u=0$, we have 
  $$\langle F_{\rho}u,u\rangle\geq \lambda_2|u|^2.$$ 
  Use Lemma  \ref{lem:bochner}, Lemma \ref{lem:approximation}, and Corollary \ref{cor:improved L^2 estimate},
  there is a constant $\delta:=\delta(\Omega)>0$ such that for all 
  $g\in \operatorname{Dom}(d)\cap \operatorname{Dom}(d^*),$ we know 
 $$\int_{\Omega}|dg|^2dV+\int_{\Omega}|d^*g|^2dV\geq \int_{\Omega}\left\langle 
\left(\mathfrak{Ric}_p+\Theta^{(E,h)}\right)g,g\right\rangle dV+\lambda\int_{\Omega}|g|^2dV,$$
 where 
 $$\lambda:=\frac{\delta\min\{\lambda_2,1\}}{\lambda_1^2}.$$
 Moreover, if $\Omega=\mr^n$ ($n\geq 3$), then $\delta$ can be chosen to be a constant only depends on $n,|\Omega|$.
 By definition of $\left(\Theta^{(E,h)}\right)^{-1}$ and the Cauchy-Schwarz inequality,  for all 
  $g\in \operatorname{Dom}(d)\cap \operatorname{Dom}(d^*),$ we get
 \begin{align*}
 &\quad \left|\int_{\Omega}\langle f,g\rangle dV\right|^2+\lambda\frac{\left|\int_{\Omega}\langle f,g\rangle dV\right|^2}{\|f\|^2} N_f\\
 &\leq \int_{\Omega}\left\langle\left(\mathfrak{Ric}_p+ \Theta^{(E,h)}\right)g,g\right\rangle dV\cdot N_f+\lambda\int_{\Omega}|g|^2dV\cdot N_f\\
 &\leq \left(\int_{\Omega}|dg|^2dV+\int_{\Omega}|d^*g|^2dV\right)N_f.
 \end{align*}
 The remaining is completed by the Hahn-Banach extension theorem and Riesz representation theorem.
\end{proof}
 \section{The proof of Theorem \ref{thm:L^2 inverse}}

 \par To prove Theorem \ref{thm:L^2 inverse}, we need the following useful lemma:
 \begin{lem}\label{lem:extension}
  Let $(M,g)$ be a Riemannian manifold without boundary of dimension $n\geq 2$, which admits a strictly $p$-plurisubharmonic function $\eta\in C^2(M)$,
  where $p$ satisfies 
   $$\begin{cases}
     p=n, & \mbox{if } n=2, \\
     3\leq p\leq n, & \mbox{if }n\geq 3,
   \end{cases}$$
  and let $U$ be an open neighborhood of  a point $x\in M$. Then for any strictly $p$-plurisubharmonic function $\varphi\in C^2(U)$, there is an open neighborhood 
  $V\subset U$ of $x$ and a strictly $p$-plurisubharmonic function $\psi\in C^2(M)$ such that $\psi|_{V}=\varphi|_{V}$.
 \end{lem}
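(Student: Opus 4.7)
The plan is to realize $\psi$ as a regularized maximum of $\varphi$ with a suitably rescaled, shifted copy of a modified strictly $p$-plurisubharmonic function $\tilde\eta$, the gluing region being a small sublevel set of $\tilde\eta$ containing $x$.

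The first step is to modify $\eta$ so that $x$ becomes a strict local minimum of the modification. In a coordinate chart around $x$, set
$$\mu(y):=-\nabla\eta(x)\cdot(y-x)+\tfrac12\,(y-x)^T(D-\nabla^2\eta(x))(y-x)$$
for a positive-definite matrix $D$, and define $\tilde\eta:=\eta+\chi\mu$ where $\chi$ is a smooth cutoff supported in the chart and identically $1$ near $x$. Then $\nabla\tilde\eta(x)=0$ and $\nabla^2\tilde\eta(x)=D>0$, so $x$ is a strict local minimum of $\tilde\eta$. Checking that $\tilde\eta$ remains strictly $p$-plurisubharmonic globally amounts to controlling the Hessian perturbation from $\chi\mu$---which is a convex combination of $\nabla^2\eta$ and $D$ in the transition region of $\chi$, plus bounded cross terms coming from $\nabla\chi\otimes\nabla\mu$ and from $\mu\nabla^2\chi$---relative to the strict $p$-plurisubharmonicity margin of $\eta$. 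This is where the dimensional hypothesis $3\leq p\leq n$ (or $p=n=2$) enters: it provides enough room in the sum-of-any-$p$-eigenvalues condition to absorb the cutoff-induced perturbations after suitable choices of $D$ and the cutoff profile.

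With $\tilde\eta$ in hand, choose $\gamma'>0$ small enough that the connected component $\Omega$ of $\{\tilde\eta<\tilde\eta(x)+\gamma'\}$ containing $x$ satisfies $\overline\Omega\subset\subset U$, which is possible because $\tilde\eta$ has a strict local minimum at $x$. Pick $0<\gamma<\gamma'$ and let $U_0:=\Omega\cap\{\tilde\eta<\tilde\eta(x)+\gamma\}$. For a fixed small $\epsilon>0$ and $A>0$ chosen large enough that $A\gamma>\sup_{\overline\Omega}\varphi-\varphi(x)+2\epsilon$, select $K$ so that $g:=A\tilde\eta+K$ satisfies $g(x)<\varphi(x)-\epsilon$ and $g>\varphi+\epsilon$ on $\overline\Omega\setminus U_0$. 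Then set
$$\psi:=\begin{cases} M_\epsilon(\varphi,g) & \text{on }\Omega,\\ g & \text{on }M\setminus\Omega,\end{cases}$$
where $M_\epsilon(u,v):=\phi_\epsilon(u-v)+v$ is a regularized maximum built from a smooth convex approximation $\phi_\epsilon$ of $\max(\cdot,0)$ vanishing for $t\leq-\epsilon$ and equal to $t$ for $t\geq\epsilon$. Near $\partial\Omega$ one has $\varphi\leq g-\epsilon$, so $M_\epsilon(\varphi,g)=g$ and the two definitions match smoothly; on a small neighborhood $V\subset U_0$ of $x$ where $\varphi\geq g+\epsilon$, $M_\epsilon(\varphi,g)=\varphi$, giving $\psi|_V=\varphi|_V$. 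Since a direct computation shows $\nabla^2 M_\epsilon(u,v)$ equals a convex combination of $\nabla^2 u$ and $\nabla^2 v$ plus a rank-one positive semidefinite term, $\psi$ is strictly $p$-plurisubharmonic throughout $M$.

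The main technical difficulty is the modification $\eta\mapsto\tilde\eta$: the quadratic correction $D-\nabla^2\eta(x)$ itself need not be $p$-plurisubharmonic when $\nabla^2\eta(x)$ is only strictly $p$-plurisubharmonic, so the choice of $D$ and the width of the cutoff $\chi$ must be carefully balanced against the strict $p$-plurisubharmonicity margin of $\eta$. The dimensional restriction on $p$ in condition $\clubsuit$ (ii) is precisely what makes this balancing possible, after which the regularized-max gluing is straightforward.
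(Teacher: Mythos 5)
There is a genuine gap, and it sits exactly where you flag the ``main technical difficulty'': the construction of $\tilde\eta$ does not work, and the dimensional hypothesis on $p$ does not rescue it. To force $\nabla\tilde\eta(x)=0$ you must include the linear term $-\nabla\eta(x)\cdot(y-x)$ in $\mu$ with that exact coefficient; it cannot be scaled down. In the transition annulus of the cutoff (say of width $r$), the Hessian of $\chi\mu$ contains $\nabla\chi\otimes\nabla\mu+\nabla\mu\otimes\nabla\chi$ and $\mu\,\nabla^2\chi$, whose most negative eigenvalue is of order $|\nabla\eta(x)|/r+\|D-\nabla^2\eta(x)\|$. This is a fixed quantity bounded below independently of all your choices (shrinking $r$ makes it worse, and $\|D-\nabla^2\eta(x)\|\approx\|\nabla^2\eta(x)\|$ for small $D>0$), whereas the strict $p$-plurisubharmonicity margin of $\eta$ on that annulus may be arbitrarily small. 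Moreover, the sum-of-any-$p$-eigenvalues condition gives no extra room against a large negative eigenvalue concentrated in one direction: by the variational characterization, a rank-two perturbation with a negative eigenvalue $-\Lambda$ lowers the minimal sum of $p$ eigenvalues by about $\Lambda$ for every $p$. So the claim that ``$3\leq p\leq n$ provides enough room to absorb the cutoff-induced perturbations'' is not a proof sketch of a true mechanism; it is where the argument breaks.

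The actual role of the hypothesis on $p$ is different, and seeing it points to the fix the paper uses. Instead of flattening the gradient of $\eta$ at $x$, one adds to $\eta$ a small multiple of a function tending to $-\infty$ at $x$: for $n\geq3$ the function $h(y)=\ln\bigl(d(x,y)^2/(16R^2)\bigr)$, whose Hessian near $x$ has eigenvalues comparable to $1/d^2$ ($(n-1)$-fold) and $-1/d^2$ (simple), so the sum of any $p$ of them is $\gtrsim (p-2)/d^2>0$ precisely when $p\geq3$; for $n=2$, $p=n$, the negative Dirichlet Green function (the $p=n$ condition is just subharmonicity). Because this auxiliary function is $-\infty$ at $x$, the modified $\rho=\eta+f/m+m$ automatically drops below $\varphi$ near $x$ with no need to match gradients, the cutoff is applied only where $h$ is smooth with bounded derivatives so its errors are killed by the factor $1/m$, and then a regularized maximum of $\varphi$ and $\rho$ glues exactly as in the second half of your argument (which is fine, and is essentially what the paper does once $\rho$ is in hand). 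You should replace the $\tilde\eta$ step with a construction of this type.
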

 \begin{proof}
 Let $r_0$ be the injective radius of $M$ at $x$.
 Choose $r_1>0$ such that $r_1<r_0$, and fix a normal coordinate $(B(x,r_1);y_1,\cdots,y_n)$ near $x$,
 then we know $g(x)$ is the identity matrix.
% Let $K>0$ be a bound of the sectional curvature of $M$ in $B(x,r_1)$.
 Now choose $R>0$ such that $5R<r_1$
 and $B(x,5R)\subset\subset U$. \\
 {\bf Claim:} Shrinking $R$ if necessary, we may find a function $f\in C^\infty(M\setminus\{x\})$
 such that the following holds:
 \begin{itemize}
 \item[(i)] $f$ is $p$-plurisubharmonic  on $B(x,2R)\setminus\{x\}.$
 \item[(ii)] $f(x)=-\infty$, and $f$ is continuous at $x.$
 \item[(iii)] $f(y)=0$ for all $y\in\partial B(x,4R)$.
 \end{itemize}
 
 \indent Indeed, when $n=2$, let $f$ be the (negative) Dirichlet Green function of $B(x,4R)$. 
 When $n\geq 3$, set 
 $$h(y):=\ln \left(\frac{ d(x,y)^2}{16R^2}\right),\ \forall y\in M,$$
 then by the Hessian comparison theorem (see \cite[Theorem 11.7]{L18}), $h$ is strictly $p$-plurisubharmonic on $B(x,2R)\setminus\{x\}$ by shrinking 
 $R$ if necessary as $p\geq 3$. Choose a bump function $\chi\in C_c^\infty(M)$ such that 
 $$\chi|_{B(x,2R)}\equiv 1,\ \chi(y)=0,\ \forall y\in M\setminus B(x,5R).$$
 Set $f:=\chi h$, then $f$ satisfies the conditions of the {\bf Claim}.\\
 \indent Define 
 $$\rho:=\eta+\frac{f}{m}+m,$$
 where $m>>1$ enough large is an integer such that the following holds:
  \begin{itemize}
    \item[(i)] $\rho(y)\geq 4+\varphi(y)$ for all $y\in\partial B(x,2R)$.
    \item[(ii)] $\rho$ is strictly $p$-plurisubharmonic on $B(x,5R)\setminus B(x,R)$.
  \end{itemize}
  Clearly, $\rho\in C^2(M\setminus\{x\}),\ \rho(x)=-\infty,$  $\rho$ is continuous at $x$,
  and $\rho$ is strictly $p$-plurisubharmonic on $B(x,2R)\setminus\{x\}.$
 Define 
 $$\phi(y):=\begin{cases}
              \rho(y), & \mbox{ if }y\in M\setminus B(x,2R), \\
              \max_{(1,1)}\{\varphi(y),\rho(y)\}, & \mbox{ if } y\in B(x,2R), \
            \end{cases}$$
 where $\max_{(1,1)}$ is the regularized maximum function defined in \cite[Lemma 5.18, Chapter I]{D12}.
 Then we know $\phi\in C^2(M)$, $\phi$ is strictly $p$-plurisubharmonic on $B(x,5R),$ 
  $\phi=\varphi$ in $B(x,r_2)$ for some $0<r_2<R$ as $\rho$ is continuous at $x$. Set 
 $$\psi(y):=\begin{cases}
              \eta(y)+m, & \mbox{ if }y\in M\setminus B(x,4R), \\
              \phi(y), & \mbox{ if } y\in B(x,4R), \
            \end{cases}$$ 
 then $\psi\in C^2(M)$ is strictly $p$-plurisubharmonic on $M$ and $\psi=\varphi$ on $B(x,r_2)$.
 \end{proof}
 \begin{thm}[= Theorem \ref{thm:L^2 inverse}]
 Let $M$ be a  Riemannian manifold without boundary  satisfying condition  $\clubsuit$, and let $(E,h)$ be a flat vector bundle over $M$, which satisfies the following: for any strictly $p$-plurisubharmonic function $\psi\in C^2(M)$, and any $d$-closed $f\in  C^\infty_c(M,\Lambda^pT^*M\otimes E),$ 
there exists a Lebesgue measurable differential form $u$ such that 
$$du=f\text{ and }\int_{M}|u|^2e^{-\psi}dV\leq \int_{M}\langle F_\psi^{-1}f,f\rangle e^{-\psi}dV.$$
Then we have $\mathfrak{Ric}_p+\Theta^{(E,h)}\geq 0$.
\end{thm}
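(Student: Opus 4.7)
I would argue by contradiction. Suppose there is a point $x_0\in M$ and a vector $u_0\in (\Lambda^pT^*M\otimes E)_{x_0}$ with
$\langle(\mathfrak{Ric}_p+\Theta^{(E,h)})u_0,u_0\rangle_{x_0}<0$.
Extending $u_0$ to a local section on a normal coordinate neighborhood $U$ of $x_0$ (for instance by parallel transport along radial geodesics, so $Du_0(x_0)=0$), this negativity persists on a small ball $B(x_0,r_0)\subset U$ by continuity. The goal is to turn the negativity into a violation of the $L^2$-estimate for an appropriate choice of weight $\psi$ and $d$-closed test form $f$.

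The first step is to build a global strictly $p$-plurisubharmonic weight $\psi\in C^2(M)$ with a prescribed quadratic behaviour at $x_0$. This is where condition $\clubsuit$ enters essentially. In case (i), one takes $\psi=A\cdot f+B\cdot\eta_0$, where $f(y)=d(x_0,y)^2$ near $x_0$ is the strictly $p$-plurisubharmonic function provided by $\clubsuit$(i) and $\eta_0$ is the $p$-plurisubharmonic exhaustion supplied by $p$-convexity; the constants $A,B$ are chosen to make $F_\psi(x_0)$ equal a small positive multiple of the identity. In case (ii) one first builds a local strictly $p$-plurisubharmonic function with the desired Hessian at $x_0$ and then invokes Lemma~\ref{lem:extension} to extend it to a strictly $p$-plurisubharmonic function on all of $M$. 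Transforming the metric via $h\mapsto he^{-\psi}$ and using identity (\ref{metric}), one has $\Theta^{(E,he^{-\psi})}=\Theta^{(E,h)}+F_\psi$, so the weighted $L^2$-estimate from the hypothesis becomes a statement about the metric $he^{-\psi}$.

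The second step is to construct a family of compactly supported, $d$-closed $p$-forms $f_\epsilon$ concentrating at $x_0$ whose shape tracks $u_0$. A natural choice is $f_\epsilon=d(\chi_\epsilon\,\beta)$ where $\chi_\epsilon$ is a cutoff supported in $B(x_0,\epsilon)$ and $\beta$ is a local $(p-1)$-form primitive chosen so that $d\beta(x_0)$ pairs nontrivially with $u_0$; $d$-closedness of $f_\epsilon$ is automatic since it is an exact form. Applying the hypothesis produces a measurable $u_\epsilon$ with $du_\epsilon=f_\epsilon$ and
$$\int_M |u_\epsilon|^2 e^{-\psi}\,dV\leq \int_M\langle F_\psi^{-1}f_\epsilon,f_\epsilon\rangle e^{-\psi}\,dV.$$
Combining this with the weighted version of Lemma~\ref{lem:bochner without boundary} (obtained by replacing $h$ by $he^{-\psi}$ and using (\ref{metric})) and testing against $u_\epsilon-\chi_\epsilon\beta$ (whose differential vanishes on $B(x_0,\epsilon)$), one can rescale by an appropriate power of $\epsilon$ and pass to the limit $\epsilon\to 0$. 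The leading-order terms of both sides are controlled by the pointwise quadratic forms at $x_0$, and the inequality reduces, after the rescaling, to
$\langle(\mathfrak{Ric}_p+\Theta^{(E,h)})u_0,u_0\rangle_{x_0}\geq 0$,
contradicting our standing assumption.

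\textbf{Main obstacle.} The principal difficulty is the construction of $f_\epsilon$: it must be $d$-closed, concentrate at $x_0$, and be such that both sides of the $L^2$-estimate admit matching asymptotic expansions in $\epsilon$ whose leading coefficients recover exactly $\langle(\mathfrak{Ric}_p+\Theta^{(E,h)})u_0,u_0\rangle_{x_0}$. This requires a careful local expansion in normal coordinates and a judicious choice of the rescaling exponent. Secondary difficulties are the case split in condition $\clubsuit$ when building $\psi$ — in case (i) one must verify that the squared-distance construction is actually usable as a $C^2$ weight near $x_0$ (guaranteed by the hypothesis in $\clubsuit$(i)), while in case (ii) the cleanliness of Lemma~\ref{lem:extension} makes things direct but forces the argument to be done away from the diagonal behaviour that case (i) would have provided.
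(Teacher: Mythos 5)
Your overall architecture matches the paper's: argue by contradiction at a point $x_0$, use condition $\clubsuit$ together with Lemma~\ref{lem:extension} to build a global strictly $p$-plurisubharmonic weight agreeing with $d(\cdot,x_0)^2$ (up to a constant) near $x_0$, take an exact compactly supported test form $f=d(\chi\eta_0)$ whose value at $x_0$ is the bad direction, and combine the $L^2$ hypothesis with Lemma~\ref{lem:bochner without boundary} and Cauchy--Schwarz. The place where your proposal breaks down is the concentration mechanism. You shrink the support of the test form, setting $f_\epsilon=d(\chi_\epsilon\beta)$ with $\chi_\epsilon$ supported in $B(x_0,\epsilon)$, and claim that after rescaling the leading asymptotics of both sides recover the pointwise quantity $\langle(\mathfrak{Ric}_p+\Theta^{(E,h)})u_0,u_0\rangle_{x_0}$. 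They do not: since $|\beta|\sim\epsilon$ and $|d\chi_\epsilon|\sim\epsilon^{-1}$ on the transition annulus, the cutoff term $d\chi_\epsilon\wedge\beta$ has size $O(1)$ on a set of volume comparable to $\epsilon^n$, which is exactly the same order as the main term $\chi_\epsilon\,d\beta$. No choice of transition width improves this ratio, so the annulus contribution --- which has no definite sign --- contaminates the leading coefficient, and the limit $\epsilon\to 0$ does not isolate the negative quantity at $x_0$. A secondary issue: ``testing against $u_\epsilon-\chi_\epsilon\beta$'' is not a licit use of the Bochner identity, since $u_\epsilon$ is only a measurable $L^2$ solution and Lemma~\ref{lem:bochner without boundary} requires $C^1_c$ forms; the hypothesis must instead be converted, via $\int\langle f,\alpha\rangle e^{-\psi}=\int\langle du,\alpha\rangle e^{-\psi}$ and Cauchy--Schwarz, into a lower bound for $\|d^*_\psi\alpha\|^2$ with $\alpha:=F_\psi^{-1}f$.

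The paper concentrates with the weight instead of the form: it fixes $f=d(\chi\eta_0)$ once, arranges $\psi=d(\cdot,x_0)^2-R^2$ so that $\psi\le 0$ on $B(x_0,R)$ (where $f\equiv\xi_0$) and $\psi>0$ on the annulus where $\chi$ is not constant, and then applies the hypothesis with $\psi_m:=m\psi$. Two things happen simultaneously: $\alpha_m:=F_{\psi_m}^{-1}f=\tfrac1m F_\psi^{-1}f$ scales every term of the resulting inequality (\ref{inequality}) by $m^{-2}$, and the factor $e^{-m\psi}$ is $\ge 1$ on $B(x_0,R)$ while it tends to $0$ on the annulus, so the unsigned cutoff contribution is wiped out as $m\to\infty$ and only the strictly negative inner-ball contribution survives. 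This weight-rescaling step, together with the sign normalization of $\psi$ that makes it work, is the key idea missing from your proposal; without it, or some equivalent way of suppressing the cutoff region, the contradiction cannot be extracted.
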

\begin{proof}
 Set $B^{(E,h)}:=\mathfrak{Ric}_p+\Theta^{(E,h)}$.
 We argue by contradiction, suppose the conclusion $B^{(E,h)}\geq 0$ not holds, then there is 
$x_0\in M$ and $\xi_0\in \Lambda^pT_{x_0}^*M\otimes E_{x_0}$ such that 
$|\xi_0|=1$ and $\langle B^{(E,h)}\xi_0,\xi_0\rangle=-2c$ for some $c>0.$
Choose a normal coordinate $(U;x_1,\cdots,x_n)$ near $x_0$.
Choose $R>0$ such that $B(x_0,2R)\subset\subset U$, and choose $\chi\in C_c^\infty(B(x_0,2R))$ such that 
$\chi|_{B(0,R)}=1.$ Note that for any $i_1<\cdots<i_p$, we may write 
$dx^{i_1}\wedge\cdots\wedge dx^{i_p}=d\eta$ for some $(p-1)$-form $\eta$,
then we may write $\xi_0=d\eta_0$ for some $(p-1)$-form $\eta_0$.
Set $f:=d(\chi \eta_0),$ then we know $f\in C_c^\infty(M,\Lambda^pT^*M\otimes E)$ is $d$-closed
and $f|_{B(x_0,R)}=\xi_0$, and then we get  
$$\langle B^{(E,h)}f,f\rangle<-c\text{ on }B(x_0,R).$$
Since the Hessian of $d(x,x_0)^2$ is positive definite at $x_0$,
then by Lemma \ref{lem:extension}, we can construct a strictly $p$-plurisubharmonic function 
$\psi\in C^2(M)$ such that 
$$\psi(x)=d(x,x_0)^2-R^2,\ \forall x\in B(x_0,2R),$$ 
where one may shrink $R$ if necessary.
For each $m\in\mathbb{N}\setminus\{0\}$, set $\psi_m:=m\psi$, and set $\alpha_m:=(F_{\psi_m})^{-1}f=\frac{1}{m}F_{\psi}^{-1}f$.
Similar as the proof of \cite[Theorem 1.1]{DNWZ23},
using Lemma \ref{lem:bochner without boundary}, the Cauchy-Schwarz inequality and the assumption, we get 
\begin{equation}\label{inequality}
\int_{M}\langle B^{(E,h)}\alpha_m,\alpha_m\rangle e^{-\psi_m}dV+\int_{M}|D\alpha_m|^2e^{-\psi_m}dV\geq 0,\ \forall m>0.
\end{equation}
Shrink $R$ if necessary, we may assume 
$$\sup_{B(x_0,R)}|D\alpha_m|\leq \frac{\sqrt{c}}{2m},\ \sup_{B(x_0,R)}\langle B^{(E,h)}\alpha_m,\alpha_m\rangle\leq -\frac{3c}{4m^2}.$$
Choose $C>0$ such that for all $m>0$, we have 
$$|D\alpha_m|\leq \frac{\sqrt{C}}{m},\ \langle B^{(E,h)}\alpha_m,\alpha_m\rangle\leq \frac{C}{m^2}.$$
Therefore, for any $m>0$, we get 
\begin{align*}
&\quad m^2\left(\int_{M}\langle B^{(E,h)}\alpha_m,\alpha_m\rangle dV+\int_{M}|D\alpha_m|^2dV\right)\\
&\leq -\frac{c}{2}\int_{B(x_0,R)}e^{-\psi_m}dV+2C\int_{B(x_0,2R)\setminus B(x_0,R)}e^{-\psi_m}dV\\
&\leq -\frac{c}{2}|B(x_0,R)|+2C\int_{B(x_0,2R)\setminus B(x_0,R)}e^{-\psi_m}dV.
\end{align*}
Use  Lebesgue dominated convergence theorem, we have 
$$m^2\left(\int_{M}\langle B^{(E,h)}\alpha_m,\alpha_m\rangle e^{-\psi_m}dV+\int_{M}|D\alpha_m|^2e^{-\psi_m}dV\right)<0$$
for enough large $m$, which is a contradiction with Inequality (\ref{inequality}).
\end{proof}
\section{The proofs of Theorem \ref{thm:limit metric} and \ref{thm:prekopa theorem}}

\par Firstly, we give the proof of  Theorem \ref{thm:limit metric}.
\begin{thm}[= Theorem \ref{thm:limit metric}]
 Let $M$ be a Riemannian manifold without boundary satisfying condition $\clubsuit$, and let $E$ be a flat vector bundle over $M$.
 Suppose $\{h_j\}_{j=1}^\infty $ is an increasing sequence of Riemannian metrics of class $C^2$ on $E$ such that $\mathfrak{Ric}_p+\Theta^{(E,h_j)}\geq 0$,
 and let $h:=\lim_{j\rw \infty}h_j$. Then $(M,E,h)$ satisfies the $p$-optimal $L^2$-estimate condition.
In particular, if $h$ is of class $C^2$, then $\mathfrak{Ric}_p+\Theta^{(E,h)}\geq 0$.
\end{thm}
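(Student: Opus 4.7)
The plan is to apply Theorem \ref{thm:flat vector bundle} to each $h_j$ twisted by $e^{-\psi}$, extract a weak limit of the resulting solutions in a fixed reference Hilbert space, and recover the bound for the limit via weak lower semicontinuity together with monotone convergence. The principal obstacle is that the Hilbert-space norm itself depends on the varying metric, so weak compactness is not canonically available in the $L^2$-space associated with the limit $h$; the fix is to run weak compactness in the fixed space $\mathcal H$ attached to the smallest metric $h_1$, and only reconstruct the $h$-bound at the end, one layer $h_k$ at a time. The ``in particular'' statement will then be an immediate application of Theorem \ref{thm:L^2 inverse}.

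Fix any strictly $p$-plurisubharmonic $\psi\in C^2(M)$ and any $d$-closed $f\in C_c^\infty(M,\Lambda^pT^*M\otimes E)$. For each $j$ the metric $h_je^{-\psi}$ is of class $C^2$, and Equality (\ref{metric}) together with the hypothesis yields
$$\mathfrak{Ric}_p+\Theta^{(E,h_je^{-\psi})}=\mathfrak{Ric}_p+\Theta^{(E,h_j)}+F_\psi\geq F_\psi>0.$$
Since $f$ has compact support, the integrability assumption of Theorem \ref{thm:flat vector bundle} is automatic, so we obtain $u_j$ with $du_j=f$ and
$$\int_M h_j(u_j,u_j)\,e^{-\psi}\,dV\leq \int_M h_j\!\left((\mathfrak{Ric}_p+\Theta^{(E,h_j)}+F_\psi)^{-1}f,f\right)e^{-\psi}\,dV\leq \int_M h_j(F_\psi^{-1}f,f)\,e^{-\psi}\,dV.$$
A local-frame calculation, exploiting that the Gram-type matrix $(g(F_\psi^{-1}f^a,f^b))_{a,b}$ is positive semi-definite, shows that $h'\mapsto h'(F_\psi^{-1}f,f)$ is nondecreasing in $h'$ in the positive-semidefinite order on $E$. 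Hence each right-hand side is dominated by $C:=\int_M h(F_\psi^{-1}f,f)\,e^{-\psi}\,dV$, which is finite because $f\in C_c^\infty$ and $F_\psi>0$ on $\operatorname{supp}(f)$.

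Using $h_1\leq h_j$, the sequence $(u_j)$ is uniformly bounded in the single reference Hilbert space $\mathcal H:=L^2(M,\Lambda^pT^*M\otimes E;\,h_1e^{-\psi}\,dV)$. Extract a subsequence $u_{j_l}\rightharpoonup u$ weakly in $\mathcal H$; pairing against $C_c^\infty$ test forms yields $du=f$ in the distributional sense. For each $k\geq 1$, the convex functional $F_k(v):=\int_M h_k(v,v)\,e^{-\psi}\,dV$ on $\mathcal H$ is strongly lower semicontinuous (Fatou applied to an a.e.-convergent subsequence of any strongly convergent sequence) and therefore weakly lower semicontinuous. For $j_l\geq k$ one has $F_k(u_{j_l})\leq F_{j_l}(u_{j_l})\leq C$, so $F_k(u)\leq C$. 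Sending $k\to\infty$ and applying monotone convergence gives $\int_M h(u,u)\,e^{-\psi}\,dV\leq C$, which is precisely the $p$-optimal $L^2$-estimate condition for $(M,E,h)$.
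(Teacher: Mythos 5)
Your proof is correct and follows essentially the same route as the paper's: solve for each $h_j$ to get $u_j$ with the uniform bound $\int_M h(F_\psi^{-1}f,f)e^{-\psi}dV$, extract a weak limit $u$ with $du=f$, deduce $\int_M h_k(u,u)e^{-\psi}dV\leq C$ for each fixed $k$ by weak lower semicontinuity, and let $k\to\infty$ by monotone convergence/Fatou. The only cosmetic differences are that the paper extracts the weak limit in $L^2_{\loc}$ via a diagonal argument rather than in your fixed $h_1$-weighted Hilbert space, and obtains the $u_j$ by citing Corollary \ref{cor:equivalent} where you unwind that through Theorem \ref{thm:flat vector bundle} and Equality (\ref{metric}).
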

\begin{proof}
 It suffices to prove $(M,E,h)$ satisfies the $p$-optimal $L^2$-estimate condition by Corollary \ref{cor:equivalent}.
 Fix a $d$-closed $f\in C_c^\infty(M,\Lambda^pT^*M\otimes E)$, and fix a strictly $p$-plurisubharmonic function $\psi\in C^2(M)$.
 By  Corollary \ref{cor:equivalent}, for any $j\geq 1$, we can find a Lebesgue measurable differential form $u_j$ such that 
 $du_j=f$, and
 $$\int_{M}h_j(u_j,u_j)e^{-\psi}dV\leq \int_{M}h_j(F_{\psi}^{-1}f,f)e^{-\psi}dV\leq\int_{M}h(F_{\psi}^{-1}f,f)e^{-\psi}dV,$$
 so we know 
 $$\int_{M}h_k(u_j,u_j)e^{-\psi}dV\leq \int_{M}h(F_{\psi}^{-1}f,f)e^{-\psi}dV,\ \forall j\geq k\geq 1.$$
 By taking a weakly convergent subsequence of $u_j$ and use the Cantor's diagonal argument, then we may assume $u_j$ converges weakly to a Lebesgue measurable differential form $u$ in the space $L_{\loc}^2(M,\Lambda^pT^*M\otimes E)$, and for any $k\geq 1$, we get 
 $$\int_{M}h_k(u,u)e^{-\psi}dV\leq \liminf_{j\rw \infty}\int_{M}h_k(u_j,u_j)e^{-\psi}dV\leq \int_{M}h(F_{\psi}^{-1}f,f)e^{-\psi}dV.$$
  Thus, we have  
 $du=f$, and use  Fatou's lemma, we conclude that
 $$\int_{M}h(u,u)e^{-\psi}dV\leq \liminf_{k\rw \infty}\int_{M}h_k(u,u)e^{-\psi}dV\leq\int_{M}h(F_{\psi}^{-1}f,f)e^{-\psi}dV.$$
 Hence, $(M,E,h)$ satisfies the $p$-optimal $L^2$-estimate condition.
\end{proof}
\indent Secondly, we give the proof of Theorem \ref{thm:prekopa theorem}.
\begin{thm}[= Theorem \ref{thm:prekopa theorem}]
  Let $M$ be a Riemannian manifold without boundary  satisfying condition $\clubsuit$, 
  $N$ be a $p$-convex Riemannian manifold of dimension $\geq p$,
and let $(E,\tilde{h})$ be a flat vector bundle over 
 $M\times N$ such that $\mathfrak{Ric}_p^{M\times N}+\Theta^{(E,\tilde{h})}\geq 0.$ 
 Define a metric $h$ on $E$ via   
 $$h_x:=\int_{N}\tilde{h}_{(x,y)} dV(y),\ \forall x\in M,$$
 then $(M,E,h)$ satisfies the $p$-optimal $L^2$-estimate condition. In particular, if $h$ is of class $C^2$,
 then $\mathfrak{Ric}_p^M+\Theta^{(E,h)}\geq 0$. 
\end{thm}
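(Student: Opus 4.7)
The plan is to reduce the assertion on $M$ to the $L^2$-estimate on the product $M\times N$, following the approach of \cite[Theorem 1.6]{DZ21}. Fix a strictly $p$-plurisubharmonic $\psi\in C^2(M)$ and a $d$-closed $f\in C^\infty_c(M,\Lambda^p T^*M\otimes E)$, regarding $E$ as a flat bundle over $M$ via descent along $\pi\colon M\times N\to M$. Set $\tilde f:=\pi^*f$; it is $d$-closed on $M\times N$. Replacing $\tilde h$ by $\tilde h':=\tilde h e^{-\pi^*\psi}$, Equation~(\ref{metric}) and the hypothesis give
\[
\mathfrak{Ric}_p^{M\times N}+\Theta^{(E,\tilde h')}=\mathfrak{Ric}_p^{M\times N}+\Theta^{(E,\tilde h)}+F_{\pi^*\psi}\ge F_{\pi^*\psi},
\]
and $F_{\pi^*\psi}$ is strictly positive on horizontal $p$-forms because $\psi$ is strictly $p$-plurisubharmonic on $M$.

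Next I would exhaust $M\times N$ by relatively compact $p$-convex open subsets (built using condition~$\clubsuit$ for $M$, the $p$-convex exhaustion of $N$, and the hypothesis $\dim N\ge p$), apply Lemma~\ref{lem:bochner} together with the Hahn--Banach/Riesz argument from the proof of Theorem~\ref{thm:flat vector bundle}, and pass to the weak limit to obtain $\tilde u$ on $M\times N$ with $d\tilde u=\tilde f$ and
\[
\int_{M\times N}|\tilde u|^2_{\tilde h}\,e^{-\pi^*\psi}\,dV_{M\times N}\le\int_{M\times N}\langle F_{\pi^*\psi}^{-1}\tilde f,\tilde f\rangle_{\tilde h}\,e^{-\pi^*\psi}\,dV_{M\times N}.
\]
By Fubini and the defining identity $h_x=\int_N\tilde h_{(x,y)}\,dV(y)$, the right-hand side equals $\int_M\langle F_\psi^{-1}f,f\rangle_h\,e^{-\psi}\,dV$.

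To descend the solution, take the minimal $L^2$-solution $\tilde u$ and set
\[
u(x):=h(x)^{-1}\int_N\tilde h(x,y)\,\tilde u^{(p-1,0)}(x,y)\,dV(y),
\]
where $\tilde u^{(p-1,0)}$ denotes the bidegree-$(p-1,0)$ component (all form indices in $M$-directions); using the flat trivialization of $E$ along $\{x\}\times N$, this is the orthogonal projection of $\tilde u^{(p-1,0)}(x,\cdot)$, in the Hilbert space $L^2(N;\Lambda^{p-1}T^*_xM\otimes E_x,\,\tilde h(x,\cdot)\,dV)$, onto the subspace of $y$-constant sections, which has operator norm at most $1$. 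Therefore
\[
|u(x)|^2_h\le\int_N|\tilde u(x,y)|^2_{\tilde h}\,dV(y),
\]
and integrating against $e^{-\psi}dV$ yields $\int_M|u|^2_h e^{-\psi}\,dV\le\int_M\langle F_\psi^{-1}f,f\rangle_h\,e^{-\psi}\,dV$, as required.

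The main obstacles I anticipate are (i) producing the $p$-convex exhaustion of $M\times N$, since the naive candidate $\rho_M+\rho_N$ need not be $p$-plurisubharmonic; the construction uses condition~$\clubsuit$ and $\dim N\ge p$ essentially, and (ii) verifying $du=f$ for the weighted fiber average above, where the $y$-dependence of $\tilde h$ introduces correction terms under the integral sign. I expect to resolve (ii) by using the minimality of $\tilde u$ (which places it in $\overline{R(d^*_{\tilde h e^{-\pi^*\psi}})}$) together with the horizontal, $y$-independent structure of $\tilde f$, forcing the correction terms to vanish after integration by parts. Alternatively, one can argue dually: test the $M\times N$ inequality against lifts $\tilde g=g\otimes\chi$ with $\chi\in C^\infty_c(N)$, let $\chi\uparrow 1_N$, and use dominated convergence (justified by the integrability $\int_N\tilde h\,dV=h<\infty$) to transfer the inequality to $M$; Hahn--Banach and Riesz representation then furnish the desired $u$.
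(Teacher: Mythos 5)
Your proposal follows essentially the same route as the paper's proof: lift $f$ and the weight to $M\times N$, absorb $\psi$ into the metric via Equality (\ref{metric}) so that the hypothesis $\mathfrak{Ric}_p^{M\times N}+\Theta^{(E,\tilde h)}\geq 0$ supplies the curvature bound needed to run the argument of Theorem \ref{thm:flat vector bundle} on the product, and then descend to $M$ by Fubini. The two obstacles you flag are exactly the points the paper treats as immediate (it asserts that $M\times N$ is $p$-convex and passes from the solution on $M\times N$ to a form on $M$ in a single phrase), and your weighted fiber average --- identified as an orthogonal projection of norm at most one in $L^2(N,\tilde h\,dV)$ --- together with the dual argument for $du=f$ is a more explicit rendering of the paper's ``regard $u$ as a form on $M$ and use Fubini'' step rather than a genuinely different method.
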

\begin{proof}
 %By Theorem \ref{thm:limit metric}, we may assume the volume of $N$ is finite. 
 We only need to prove $(M,E,h)$ satisfies the $p$-optimal $L^2$-estimate condition. 
 %Choose a $p$-plurisubharmonic exhaustion function $\varphi\in C^2(N),$ then we know 
% $$\int_{N}e^{-\varphi}dV\leq \int_{\{\varphi\leq 0\}}e^{-\varphi}dV+|N|<\infty.$$ 
 Clearly,  $M\times N$  is $p$-convex and  we may regard $(E,h)$ as a flat vector bundle over $M$.
 Fix a $d$-closed $f\in C_c^\infty(M,\Lambda^pT^*M\otimes E)$, and fix a strictly $p$-plurisubharmonic function $\psi\in C^2(M)$.
 By  the proof of Theorem \ref{thm:flat vector bundle} and use Equality (\ref{metric}),  we can find a Lebesgue measurable differential form $u$ such that 
 $du=f$, and
 $$\int_{M\times N}\tilde{h}(u,u)e^{-\psi}dV\leq \int_{M\times N}\tilde{h}(F_{\psi}^{-1}f,f)e^{-\psi}dV<\infty.$$
 %Note that it is possible that
 %$$\int_{M\times N}\tilde{h}(F_{\psi}^{-1}f,f)e^{-\psi-\varphi}dV=+\infty,$$
 %the above conclusion still holds since we have Lemma \ref{lem:bochner}. 
 Since $f$ is independent of $y\in N$, then we regard $u$ as a Lebesgue measurable differential form on $M$. 
 Using Fubini's theorem, we conclude that 
 $$\int_Mh(u,u)e^{-\psi}dV\leq \int_Mh(F_{\psi}^{-1}f,f) e^{-\psi}dV,$$
   so $(M,E,h)$ satisfies the $p$-optimal $L^2$-estimate condition.
\end{proof}
 \bibliographystyle{alphanumeric}

\begin{thebibliography}{12345678} % 参考文献之间空行才能显示反向引用
 
 \bibitem[BL76]{BL76} H.J. Brascamp and E.H. Lieb, \emph{ On extensions of the Brunn-Minkowski and Pr\'ekopa-Leindler theorems, including inequalities for log concave functions, and with an application to diffusion equation}, J. Funct. Anal. {\bf 22} (1976) 366-389.
 
 \bibitem[C19]{C19} D. Cordero-Erausquin, \emph{ On matrix-valued log-concavity and related Pr\'ekopa and Brascamp-Lieb inequalities}, Advances in Mathematics {\bf 351}(2019), 96-116.
  
  \bibitem[CM16]{CM16}  A. Cianchi and V. Maz'ya, \emph{ Sobolev inequalities in arbitrary domains}, \emph{Adv. Math.} {\bf 293}(2016), 644–696.
 
 \bibitem[D12]{D12} J. P. Demailly, \emph{ Complex analytic and differential geometry}, electric book, Version of Thursday June 21,2012, available on the author's homepage.
 
 \bibitem[DHJ23]{DHJ23} F. Deng, J. Hu, and W. Jiang, \emph{ Curvature positivity of invariant direct images of Hermitian vector bundles}, Ann. Mat. Pura Appl. (4) {\bf 202} (2023), no. 2, 927–937.

\bibitem[DHJQ24]{DHJQ24} F. Deng, J. Hu, W. Jiang, and X. Qin, \emph{ A bridge connecting convex analysis and complex analysis and curvature positivity of direct image bundles}, Preprint, Arxiv: 2405.08559 (2024). 

\bibitem[DHQ25]{DHQ25} F.Deng, G. Huang, and X. Qin, \emph{Some Sobolev-type inequalities for twisted differential forms on real and complex manifolds}, to appear.

\bibitem[DNWZ23]{DNWZ23} F. Deng, J. Ning, Z. Wang, and Z. Zhou, \emph{ Positivity of holomorphic vector bundles in terms of $L^p$-estimates for $\bar\partial$,}
  Math. Ann. {\bf 385}(2023), 575–607.
 
 \bibitem[DZ21]{DZ21} F. Deng and X. Zhang, \emph{ Characterizations of curvature positivity of Riemannian vector bundles and convexity or pseudoconvexity of bounded domains in $\mathbb{R}^n$ or $\mathbb{C}^n$ in terms of $L^2$ estimate of $d$ or $\bar\partial$ equation}, J. Funct. Anal. {\bf 281}(2021), No. 9.
     
 \bibitem[H65]{H65} L. H\"ormander, \emph{ $L^2$ estimates and existence theorems for the $\bar{\partial}$ operator},  Acta Math. {\bf 113}(1965),89-152.
 
\bibitem[JLY14]{JLY14} Q. Ji, X. Liu, and G. Yu, \emph{ $L^{2}$-estimate on $p$-convex Riemannian manifolds}, Adv. Math. {\bf 253}(2014), 234-280.

%\bibitem[J17]{J17} J. Jost, \emph{ Riemannian geometry and geometric analysis}, Universitext. Springer, Cham, seventh edition, 2017.

\bibitem[L18]{L18} J. M. Lee, \emph{ Introduction to Riemannian manifolds}, Graduate Texts in Mathematics 176, Springer, New York, 2018. 

\bibitem[L17]{L17} L. Lempert, \emph{ Modules of square integrable holomorphic germs}, Trends Math.(2017), 311–333.

\bibitem[LXYZ24]{LXYZ24} Z. Liu, B. Xiao, H. Yang, and X. Zhou, \emph{ Multiplier submodule sheaves and a problem of Lempert}, Preprint,
 Arxiv:2111.13452v2. 
  
 %\bibitem[P06]{P06} P. Petersen, \emph{ Riemannian geometry}, Graduate Texts in Mathematics 171, Springer, New York, 2006. 
   
 \bibitem[R13]{R13} H. Raufi, \emph{ Log concavity for matrix-valued functions and a matrix-valued Pr\'ekopa theorem}, Preprint, ArXiv:1311.7343.
 
 \bibitem[Z23]{Z23} X. Zhang, \emph{ Positivity of Riemannian metric and the existence theorem of  estimates for $d$ Operator}, J Geom Anal {\bf 33}(2023), 266.

 \end{thebibliography}
 
\end{document}